\def\BibTeX{{\rm B\kern-.05em{\sc i\kern-.025em b}\kern-.08em
    T\kern-.1667em\lower.7ex\hbox{E}\kern-.125emX}}
\newcommand{\stlc}{\mathrm{ST\lambda C}}
\newcommand{\stdlc}{\mathrm{ST\partial\lambda C}}
\newcommand{\set}[1]{\{ #1 \}}
\newcommand{\tansp}[2]{T_{#1}{#2}} % tangent space
\newcommand{\cotansp}[2]{T^*_{#1}{#2}} % cotangent space
\renewcommand{\tan}[1]{T{#1}} % tangent bundle
\newcommand{\cotan}[1]{T^*{#1}} % cotangent bundle
\newcommand{\plb}[2]{{#1}^*\!{#2}} % pullback
\newcommand{\cC}{\mathcal{C}}
\newcommand{\lto}{\multimap} % linear arrow
\newcommand{\R}[1]{\mathbb{R}^{#1}} % R^n
\newcommand{\id}{\mathrm{id}}
\newcommand{\ihom}[2]{[{#1}\to {#2}]} % internal homs
\newcommand{\ilhom}[2]{[{#1}\lto {#2}]} % internal linear homs
\newcommand{\curry}{\lambda}
\newcommand{\uncurry}{\lambda^{-1}}
\newcommand{\dual}[1]{{#1}^{\perp}}
\newcommand{\ctr}{c} % contraction
\newcommand{\coctr}{\overline{\ctr}} % cocontraction
\newcommand{\der}{d} % dereliction
\newcommand{\coder}{\overline{\der}} % codereliction
\newcommand{\prom}[1]{{{#1}^{!}}} % promotion of an arrow
\newcommand{\wkn}{w} % weakening
\newcommand{\cowkn}{\overline{\wkn}} % coweakening
\newcommand{\pshfwd}[1]{{{#1}_*}} %pushforward
\newcommand{\wdial}[1]{{#1}^\bullet} % Pedrot's Dialectica M*
\newcommand{\cdial}[2]{{#1}_{#2}} % Pedrot's Dialectica tranlation M_x
\newcommand{\ev}{\mathrm{ev}} % monoidal closed evaluation arrows
\newcommand{\Pdial}{\mathbf{P}} % Pedrot's lambda-calculus
\newcommand{\vdashPdial}{\vdash_\Pdial} % turnstyle for Pedrot's calculus
\newcommand{\plus}{+%\oast
} % plus in Pedrot's calculus
\newcommand{\pmon}[1]{\mathcal{M}[{#1}]} % Pedrot's monadic constructor
\newcommand{\bind}{\texttt{>\!\!>\!=}} % binder >>= of the monadic constructor
\newcommand\restr[2]{{% restriction of a function, taken from https://tex.stackexchange.com/questions/22252/how-to-typeset-function-restrictions
  \left.\kern-\nulldelimiterspace % automatically resize the bar with \right
  #1 % the function
  \vphantom{\big|} % pretend it's a little taller at normal size
  \right|_{#2} % this is the delimiter
  }}
\titleformat*{\section}{\large\bfseries}
\titleformat*{\subsection}{\normalsize\bfseries}
\titleformat*{\subsubsection}{\normalsize\em}
\newtheorem{theorem}{Theorem}[section]
\newtheorem{proposition}[theorem]{Proposition}
\newtheorem{lemma}[theorem]{Lemma}
\newtheorem{corollary}[theorem]{Corollary}
\newtheorem{note}[theorem]{Note}
\theoremstyle{remark}
\newtheorem{definition}[theorem]{Definition}
\newtheorem{remark}[theorem]{Remark}
\title{%About 
An excursion into Dialectica and Differentiation}
\author{Davide Barbarossa\\
Department of Computer Science, University of Bath, UK\\
\url{https://davidebarbarossa12.github.io/}\\ \url{db2437@bath.ac.uk}}
\begin{document}

\maketitle

%TODO mandatory: add short abstract of the document
\begin{abstract}
G\"odel's Dialectica has been introduced and developed %as a logical transformation.
in the tradition of the so-called functional interpretations.
Only recently has it been related with the \emph{a priori} unrelated notion of differentiation, by taking a program-theoretic approach.
%independently from the notion of differentiation.
%Only recently have the two been related, and 
We revisit the deep connection between these two notions in order to understand its structural reasons, as well as to express it in an arguably more natural way by following a geometric intuition.
More specifically, we give a logical relation between a Dialectica transformed term and its reverse differential in a differential category and, then, we phrase the Dialectica program transformation in the language of lenses, often used indeed in Automatic Differentiation in order to model reverse differentiation. 
We illustrate how this clarifies why Dialectica behaves as a differentiable program transformation, and what the limits of this correspondence are.
%We finally list some further interesting developments.% of interest for the category, logic and program theory communities.
\end{abstract}

\noindent\textbf{Funding.} This work was funded by the EPSRC, grant number EP/W035847/1. For the purpose of Open Access the authors have applied a CC BY public copyright licence to any Author Accepted Manuscript version arising from this submission.\\

\noindent\textbf{Acknowledgments.} We thank Thomas Powell for many instructive discussions about Dialectica.

%%%%%%%%%%%%%%%%%%%%%%%%%%%%%%%%%%%%%%%%%%%%%%%%%%%%%%%%%%%%%%%%%%%%%%%%%%%%%%%%%%%%%%%%%%
%%%%%%%%%%%%%%%%%%%%%%%%%%%%%%%%%%%%%%%%%%%%%%%%%%%%%%%%%%%%%%%%%%%%%%%%%%%%%%%%%%%%%%%%%%

\section{Introduction}

\paragraph*{Forward differentiation and its categorical formulation}

In elementary calculus, the derivative of a function on the reals is a certain limit, and its differential (giving the error in output for a certain error in input) is the product of the derivative at a point times the input error.
Using partial derivatives, one generalises this to differentiable maps $f:\mathbf{R}^n\to \mathbf{R}^m$, for which the differential $Df:\mathbf{R}^n \times \mathbf{R}^n \to \mathbf{R}^m$, linear in (say) the second argument, is the directional derivative $Df(a,v):=J_a f\cdot v$ ($J_a f$ being the Jacobian of $f$ at $a$).
Considered as a  function of $v$ (thus, linear), we obtain what is called the pushforward $\pshfwd f a$ of $f$ at $a\in\R n$.
This situation can be abstracted by \emph{Cartesian differential categories} \cite{CDCrevisited}, where $D$ is an operator on homsets, and the Cartesian \emph{closed} version admits the simply typed \emph{differential $\lambda$-calculus} ($\stdlc$ for short) \cite{the_diff_lam_calc} as internal language \cite{models_of_dlc_revisited,gallagher_phd,what_is_model_stdlc}.
%If we take a geometric perspective, 
Thinking of usual smooth maps between smooth manifolds (cfr.\ e.g.\ \cite{diff_geo_book}) as the natural setting for differentiation, the same constructions can be carried out by working in a local chart of coordinates: one constructs the tangent bundle $\tan A:=\sum_{a\in A} \tansp{a}{A}$ of a manifold $A$ and the pushforward of $f:A\to B$ as a linear map between tangent spaces, which one can think of as a dependent function $\pshfwd f : \prod_{a\in A}[\tansp a A \lto \tansp{f(a)}{B}]$ ($\tansp c C$ is the tangent space of $C$ at $c$, and we borrow the notation $\lto$ from Linear Logic for linear functions).
It is standard to see now the differential as the map $Tf: (a,v)\in \tan A \to (f a,\pshfwd f a v)\in \tan B$.
Also this situation can be abstracted categorically, via the so-called \emph{tangent categories} \cite{Diff_struc_Tangent_struc}, where $T$ is a given endofunctor.
This is \emph{forward} differentiation, as the functoriality of $T$ amounts to the usual forward mode for computing the chain rule.
The case of real valued functions, and its closed version of $\stdlc$, is the one of spaces (typically, Euclidean) where the tangent spaces are isomorphic to the base space and thus the tangent bundle trivializes: $\tan A\simeq A\times A$.

\paragraph*{Reverse differentiation and its categorical formulation}

In the realm of manifolds, the component $\pshfwd f a:\tansp{a}{A}\lto \tansp{fa}{B}$ of a pushforward $\pshfwd{f}$ at $a$ is linear, so we can take its dual  map $\dual{(\pshfwd f a)}:\cotansp{fa}{B} \lto \cotansp{a}{A}$, defined in local coordinates by the transpose of the Jacobian.
It is standard (\cite{hatcher_k_theory}) to consider the pullback $\plb{f}{\cotan B}:=\sum_{a\in A} \cotansp{fa}{B}$ of the cotangent bundle (\cite{diff_geo_book}) $\cotan B:=\sum_{b\in B} \cotansp{b}{B}$ along $f$, and let the \emph{reverse differential} of $f$ be $Rf:(a,v)\in \plb{f}{\cotan B}\to (a,\dual{(\pshfwd f a)}v)\in \cotan A$. 
%$T^*f:\cotan B \to \cotan A$, where $\cotan A:=\sum_{a\in A} \cotansp{a}{A}$ is the cotangent bundle.
Remarking that $\dual T A:=\prod_{a\in A} \cotansp{a}{A}$ is precisely the space of differential $1$-forms, yet another standard way of expressing $R$ is to see it as a map $\dual T f:\dual T B\to \dual T A$ of differential $1$-forms on $B$ to ones on $A$.
In this way $\dual T$ is \emph{contravariant}: $\dual T(f\circ g)=\dual T g\circ \dual T f$, and this corresponds to the usual backward mode for computing the chain rule. %, used e.g.\ in machine learning algorithms.
We will however stick to $R$ instead of $\dual T$.
Reverse differentiation is abstracted via \emph{reverse tangent categories} \cite{ReverseTC24}, where we do not directly have a functor $\dual T$ or $R$, but rather we have $T$ together with an involution ${(\_)}^*$ on %differential 
fibrations that allows %us to ``reverse $T$'' in order 
one to build $R$.
In the special case where $\cotan A \simeq \tan A \simeq A\times A$, the reverse differential of $f$ is $Rf:\mathbf{R}^n \times \mathbf{R}^m \to \mathbf{R}^n$ (note the swap of $n$ and $m$ w.r.t.\ $Df$), linear in the second argument.
This situation is abstracted by Cartesian \emph{reverse differential categories} \cite{Reverse_derivative_categories,monoidal_rev_diff_cat}, a particular case of reverse tangent ones, where $R$ is a given operator on homsets.

\paragraph*{Dialectica and its program-theoretic formulation}%{G\"odel's Dialectica transformation, its proof/program theoretical formulation and its links with differentiation}

In \cite{godel_dial} G\"odel defined a transformation $A\mapsto A_D$, known as ``Dialectica'', from intuitionistic arithmetic formulas (say, HA) to System T ones (cfr.\ \cite{Dialectica_by_Avigad_Feferman}).
The transformed formula $A_D$ contains two additional kinds of free variables, playing the role of witnesses ($w$) and counterexamples ($c$) for $A$, so $A_D=A_D\{w,c\}$.
The main theorem is that provability in the source system yields provability in the target, thus obtaining a relative consistency result with respect to the finitist methods of T: if $\vdash_{\mathrm{HA}} A$ then there are terms $M\in T$ witnessing $A$ in the sense that $\vdash_{\mathrm{T}} A_D\{M,c\}$ (for a free counter variable $c$).
Dialectica has been later broadly applied in the field of proof-mining \cite{proofminingbook}, with major developments thereafter.
More recently, on an orthogonal direction, by taking a proof/program-theoretical point of view, i.e.\ looking at Dialectica as a transformation of proofs/programs (and not just a transformation of formulas preserving provability), in \cite{Pedrot14,pedrot_thesis} it is shown that it is also a genuine high-order program transformation.
In particular, if we just consider the simply typed $\lambda$-calculus ($\stlc$ for short), one can define a variant of System T, let us call it $\Pdial$, and see Dialectica as a transformation $\wdial{(\_)}$, with auxiliary transformations $\cdial{(\_)}{x}$ for each variable $x$, of %(even untyped)
programs from $\stlc$ to $\Pdial$.
Even if this restriction involves a definitely logically poor source system (not suited for, say, proof mining), $\stlc$ handles the arrow type, whose treatment is the characteristic feature of G\"odel's Dialectica, and it is of course interesting from a programming viewpoint because it represents high-order computation.
The target $\Pdial$ has simple types with products, plus a monadic type constructor $\mathcal M[\_]$ for multisets (needed, as in the Diller-Nahm %variant of 
Dialectica \cite{Diller_Nahm_Dialectica,unif_fun_interpr_Oliva06}, for memorizing the counters in a contraction rule), together with two functions $W$ (witness) and $C$ (counter) from simple types to $\Pdial$-types.
G\"odel's relative consistency theorem becomes then a soundness theorem for the transformations: if $x:A\vdash_{\stlc} M:B$ then $x:W(A)\vdash_{\Pdial} \wdial M: W(B)$ and $x:W(A)\vdash_{\Pdial} \cdial M x:C(B)\to \mathcal M[C(A)]$.
This is reminiscent of the much older sequential algorithms \cite{berry_curien82}, where we have a domain map $D(A)\to D(B)$ and, for $x\in D(A)$ and $y'$ in an ``accessible cell'' in $B$, one accessible $y$ in $A$.

\paragraph*{Dialectica and its links with differentiation}

The above considerations have several interesting consequences, e.g.\ understanding Dialectica as a delimited continuation mechanism.
But the one of interest for this paper is that it allowed to notice, in \cite{KerjeanPedrot24}, that the types of $\wdial M$ and $\cdial M x$ perfectly match the ones of a function and its reverse differential at a point $x$.
This is not a coincidence, as it is shown that Dialectica is indeed a differentiable program transformation: given $M$ a simply typed $\lambda$-term, $\wdial M$ computes the differential of $M$, implementing a reverse differentiation algorithm (i.e.\ by computing the chain rule as a reverse differential -- this is precisely where the $\cdial{(\_)}{x}$ appears).
The authors show this in several ways, most notably by defining the logical relation $\sim$ (and an auxiliary one, $\bowtie$) in \cite[Def.\ 4.8]{KerjeanPedrot24} between the Dialectica transformed programs in $\Pdial$ and the already mentioned $\stdlc$, which is the syntactic way of handling (forward) differentiation in $\lambda$-calculus. %, which we thoroughly investigate in the present paper.
The main theorem $\sim$-relates then the image of the transformation of a ordinary $\lambda$-term with a certain $\stdlc$-term, which the trained differential $\lambda$-calculist understands as encoding the reverse differentiation of the first.

\paragraph*{This paper}

In this paper we try to both \emph{clarify} and \emph{understand why} Dialectica is related with (reverse) differentiation.
In particular, we will see that it is because of its structural properties, and this makes the link between the two notions arguably more natural to see.
The main claim of this paper is then the following:
\emph{Dialectica is \emph{not} reverse differentiation ``by itself'', but it does indeed \emph{behave} as a reverse differentiation transformation, in the sense that it composes as such, because Dialectica can be expressed via lenses which, in turn, are the abstract compositional setting that models the reverse chain rule.}
We argue for it in a progressive way:

Since the connection with differentiation relies on the $\stlc$, and because we are interested in the structural aspect of Dialectica, for us the latter will mean the one restricted to $\stlc$, and written in the style of \cite{Pedrot14} as the pair of transformations $\wdial{(\_)},\cdial{(\_)}{x}:\stlc\to\Pdial$.

In Section \ref{sec:log_rel}, we propose an alternative presentation of the connection between Dialectica and reverse differentiation. We define the analogue of the already mentioned logical relations $\sim$, $\bowtie$, but now %between the target language of the Dialectica transformation of simply-typed $\lambda$-terms,
instead of relating a Dialectica transformed $\Pdial$-term with a $\stdlc$-term, we relate it with an arrow in an opportune class of differential categories (Definition \ref{def:log_rels}).
The main point is that we use the natural concepts from differential geometry of pushforward and its pullback.
We then show that, on the image of the transformation, such relations perform indeed reverse differentiation (Theorem \ref{th:main}, Corollary \ref{cor:dial_rev}), which is now expressed in the standard clearer way as explained earlier. % in the introduction. 
We finally quickly discuss how it seems that one \emph{cannot} immediately lift this relationship to richer logical systems (Figure \ref{fig:tom}), at least for a natural reading of Dialectica.
%This makes explicit the relation between Dialectica and reverse differentiation.
%In order to handle $\lambda$-calculus constructions we have to move to the world of Cartesian closed differential categories.
%We achieve this in Theorem \ref{th:main}, Corollary \ref{cor:dial_rev} and Proposition \ref{prop:Pm_to_me}.
%The other reason why
The categorical setting of this section is motivated by the fact that the $\stdlc$ encodes \emph{forward} differentiation (the $\stdlc$-term $\lambda v. \, D[\lambda x.M,v]N$ is the pushforward of $M$, as a function of $x$, at $N$%, i.e.\ its directional derivative at $N$
); but since we know that Dialectica performs a \emph{reverse} differentiation, we would like to express it in a setting in which we can explicitly talk about the reverse differential, instead of hiding this reverse operation in the syntactic definition of $\sim$, $\bowtie$.
While the %a ``reverse differential $\lambda$-calculus'' in the literature, 
\emph{closed} version of Cartesian reverse differential categories does not exist in the literature yet, %(without the closure condition they do exist \cite{Reverse_derivative_categories,ReverseTC24}),
we can still use the well-known Cartesian closed differential categories, equipped with enough structure in order to express reverse differentials.

In the next Section \ref{sec:diff_lenses}, we introduce the category of lenses and recall how they precisely express the compositional aspect of the reverse chain rule (Proposition \ref{prop:difcat_to_lenses}, \ref{prop:revdifcat_to_lenses}, \ref{prop:revtangcat_to_lenses}).

This allows us, in Section \ref{sec:dial_is_lens}, to understand that the \emph{structural} aspect of Dialectica can be understood as a transformation into lenses, and that this fact is the responsible for its behaviour as reversed differentiation.
We do this by, first, factorizing the functor in \cite[Proposition 5.7]{KerjeanPedrot24} by remarking that it only uses the subcategory of a Dialectica category only involving trivial subobjects (Corollary \ref{cor:no_subsets}, Proposition \ref{prop:Giso}); %, which can be seen as a subcategory of lenses (Proposition \ref{prop:Giso}).
then, we show how the categorical notion of lens shapes Dialectica as a program transformation (Proposition \ref{prop:dial_to_ElensP}, Corollary \ref{cor:dial=functor}).% completes the evidence for the claim above.

We conclude with some interesting questions in Section \ref{sec:conclusions_futurework}.

%\begin{note}
%The mathematics and the results that are used in this paper are often very recent (except of course for the notion of Differentiation, Dialectica, and Dialectica category, which are standard topics), but they are not new.
%We will discuss at the end of this paper the relations with similar works.
%In fact, the expert in each of the topics that we will cover will probably not be surprised while reading. Nevertheless, our purpose is not to reinvent Dialectica, its link with differentiation, or Lenses, but it is to relate them and, most importantly, to understand what is the reason why Dialectica appears related to differentiation.
%Even if %(as we will discuss at the end of the paper), 
%many crucial steps are already found in the literature, to the best of our knowledge such an explanation for the relation between Dialectica and Differentiation has not been given before.
%Therefore, we hope that this work will make this fascinating topic clearer and less mysterious, whether the reader comes from the world of Dialectica, of category theory, or of $\lambda$-calculus.
%That also explains the expression ``an excursion'' in the title.
%Remark that, due to the space limitations, we take for granted the basic notions in all these topics, and we often point to the literature from which we take the mathematics that we need.
%\end{note}

\begin{note}[Related works]

In Section \ref{sec:diff_lenses} we show that lens categories express reverse differentiation.
This is folklore and can be found e.g.\ in the preprints \cite[Example 3.7]{spivak2022generalizedlenscategoriesfunctors}, or \cite[Page 6]{Spivak20}.
However, that is only in the context of polynomial functors, with no mention of Dialectica nor (reverse) differential/tangent categories, while we use it (in Section \ref{sec:dial_is_lens}) precisely in relation with those notions in order to generalize some constructions sketched in \cite{KerjeanPedrot24}.
A similar fibrations/pullbacks based point of view on Dialectica, similar to the first part of our Section \ref{sec:dial_is_lens}, has just very recently been explored in the preprint %\href{https://oxford24.github.io/assets/slides/36.pdf}{here} and 
    \cite{fibrationalPovLensesDialectica24}.
    %However, no reference to the relation with (reverse) differential/tangent categories is made. 
    %This may seem strange, because fibrations may immediately lead one think of differential geometry; but it may be useful observing here that the link between differentiation and Dialectica has also only appeared this very year in the already mentioned \cite{KerjeanPedrot24}.
    However, the direction taken there is orthogonal to ours: we both look at Dialectica categories as categories of lenses, but while our attention is towards the links with reverse differentiation and program transformations (for which it turns out that trivial subobjetcts are enough), %which at least at first does not involve subobjects,
    they are interested in lifting to lenses the Dialectica transformation of formulas (and deal with non-trivial subobjects). % and making a whole hierarchy of that, while keeping the subobjects.
    %The question whether there are relations between our approach in the differential world and theirs can clearly be asked, but we will not investigate it here.

%Summing up, while the point we want to make will probably not surprise the expert ``Lens-theorist'', we think that it is important to clearly state the links with other recent topics, namely (reverse) Differential/Tangent categories, and older ones like Dialectica categories, which we propose to do in this section. 

Last, but not least, a special mention has to be made for the topics around the \emph{CHAD} construction in Automatic Differentiation (AD). We thank Fernando Lucatelli Nunes for recently pointing this important part of the literature to us.
Indeed, the very kind of constructions that we show here to be behind the work in \cite{KerjeanPedrot24}, are used in the \emph{CHAD} tradition in order to deal with AD for high-order programs.
\emph{A posteriori}, this is clearly no surprise.
This connection between that kind of AD and Dialectica has not been explicitly mentioned in \cite{KerjeanPedrot24} nor in e.g.\ \cite{chad21,chad22}, but an important link with Dialectica has very recently appeared in the preprint \cite{nunes_vakar_grot_dial2024}, in a general abstract categorical setting.
This is certainly relevant for many of the questions that we raise at the end.
In any case, as we explained before, our focus is not on AD and, by focussing on the formulation of Dialectica as in \cite{KerjeanPedrot24}, we try instead to give a handy, intuitive, and concrete discussion about the relations between Dialectica and differentiation.
%a first base for an interaction between those important trends in AD and Dialectica, to be pursued in surely exciting future direction.

In conclusion, all the above mentioned topics seem to converge to the same kind of mathematics, and if the expert in each of the topics will probably not be surprised while reading this paper, our purpose is, nevertheless, not to reinvent already existing topics; %Dialectica, its link with differentiation, or Lenses; 
it is to relate them in order to understand and clarify, in a natural and geometrically intuitive way, what is the reason why Dialectica appears related to differentiation: to the best of our knowledge such a point of view %for the relation between Dialectica and Differentiation 
has not been explicited before.
Therefore, we hope that this ``excursion'' will make this deep topic clearer and less mysterious, whether the reader comes from the world of Dialectica, of category theory/AD, or of $\lambda$-calculus.
%That also explains the expression ``an excursion'' in the title.
\end{note}

\begin{note}
We write $f;g$ for the composition of $f:A\to B$ and $g:B\to C$ in a category.
We write $1_A$ for the identity arrow on $A$, and we drop $A$ when clear from the context.
If $\times$ denotes a product, we write $\pi_i^{A_1,A_2}:A_1\times A_2\to A_i$ for the projections (and we drop the ``$A_1,A_2$'' if clear).
If a category is closed, we denote $\curry/\uncurry$ its curry/uncurry operators.
If the products are symmetric (as it will always be), we keep implicit the obvious isos, so $\curry:\cC(A\times B,C)\to\cC(A,C^B)$ and $\curry:\cC(B\times A,C)\to\cC(A,C^B)$, %, when it is clear from the context,
and the same for uncurry.
Due to the space limitations, we take for granted the basic notions in all the topics that we cover, and we often point to the literature from which we take the mathematics that we need.
\end{note}

\section{Dialectica and reverse differentiation}\label{sec:log_rel}

%\subsection{The framework}\label{subsec:framework}

\paragraph*{P\'edrot's style Dialectica}

The calculus is given in \cite[Fig.\ 1 and Def.\ 4.1]{KerjeanPedrot24}, but for the sake of clarity we recall here its main features.
The first layer of syntax is that of a simply typed $\lambda$-calculus with pairs (notation:$\langle M, N\rangle$ for pairs and $M^i$ for projections) and product types (notation: $A\times B$), quotiented under the usual $\beta\eta$-equality.
On top of that, we have a new monadic type constructor $\pmon \_$, together with its return and bind term constructors (notation: $[M]$ and $M\bind N$), and a commutative monoid structure on it (notation: $0$, $M+N$) which is compatible with the monad structure. 
%This is needed in order to have a \`a la Diller-Nam treatment of Dialectica.
All these equations (together with $\beta\eta$) constitute its equational semantics and are denoted by $=$.
Finally, since we are considering Dialectica as a transformation of proofs, not just of formulas, the transformation is now given by two maps $W,C$ from simple types to $\Pdial$-types and two maps $\wdial{(\_)}, \cdial{(\_)}{x}$ (for $x$ any variable) from $\lambda$-terms to $\Pdial$-terms, inductively defined in Figure \ref{fig:WC} and Figure \ref{fig:dial}.
In \cite{Pedrot14} one finds the soundness results mentioned in the introduction, as well as the computational interpretation of Dialectica, together with the proof of the fact that such transformation only depends on the equational semantics classes of $\stlc$.

\begin{figure}[t]
\begin{subfigure}{0.99\textwidth}
   \centering\boxed{
    \begin{tabular}{cc|ccc|cc}
          & & & $\alpha$ (ground types) & & & $E\to F$ \\
          \hline
        W & & & $\alpha_W$             & & & $(W(E)\to W(F))\times (W(E)\times C(F)\to \pmon{C(E)})$ \\
        C & & & $\alpha_C$             & & & $W(E)\times C(F)$
    \end{tabular}}
   \caption{Witnesses and Counters of a simple type. $\alpha_W$ and $\alpha_C$ are fixed ground types of $\Pdial$ associated with $\alpha$. In the second component of the witness of an arrow, we take a slightly different version, but equivalent, from the original, which would curry our type as $C(F)\to W(E)\to \pmon{C(E)}$. This is just because we want to highlight the intuition of the reverse differentials: with dependent types in mind, $W(E)\times C(F)=C(E\to F)$ plays the role of $\sum_{e:E} T_{fe}^*F=f^*T^*F$. \phantom{, so we could not give $v:C(F)$ \emph{before} $e:W(E)$ (what would $v$ be cotangent at?).}
   }
   \label{fig:WC} 
\end{subfigure}
\begin{subfigure}{0.99\textwidth}
   \centering\boxed{
    \begin{tabular}{c|cc|cc|cc}
                        & $x$  & & $\lambda x.M$ & & $PQ$ \\
        \hline
        $\wdial{(\_)}$  & $x$  & & $\langle\,\lambda x.\wdial M\,,\,\lambda\pi.(\lambda x.\cdial M x)\pi^1\pi^2\,\rangle$ & & ${\wdial P}^1\wdial Q$ \\
        $\cdial{(\_)}{y}$  &  $\begin{cases}
            \lambda\pi.[\pi], & x=y \\
            \lambda\pi.0, & x\neq y
        \end{cases}$ &  & $\lambda\pi.(\lambda x.\cdial M y)\pi^1\pi^2$ & & $\lambda\pi.( \cdial P y\langle\wdial Q,\pi\rangle+{\wdial P}^2\langle \wdial Q,\pi\rangle \bind \cdial Q y)$
    \end{tabular}}
   \caption{Untyped Dialectica transformation.
    Remark that we take a slightly different version, but equivalent, than the original, in order to fit with the modification mentioned above.
    Notice that %with a slight abuse of $\alpha$-equivalence, we have 
    $\wdial{(\lambda x.M)}$ %$=\langle\,\lambda x.\wdial M\,,\,\cdial{(\lambda x.M)}{x}\,\rangle$, which 
    is reminiscent of the pair ``$(f,f_*)$''.}
   \label{fig:dial}
\end{subfigure}
\caption{The Dialectica program transformation for the $\stlc$ in $\Pdial$.}
\label{fig:Pdial}
\end{figure}

\paragraph*{Relating Dialectica and Reverse Differentials}

The constructions of this section take place in a generic model $\cC$ of classical Differential Linear Logic, which we fix below following \cite{KerjeanPedrot24}.
One example is found in \cite[Theorem 7.16]{mod_LL_schwartz19}.

\begin{note}
We fix a left-additive category $\cC$ enriched over commutative monoids (we use $0,+$ for the operations on the homsets) endowed with:
%\begin{itemize}
    %\item 
    a symmetric monoidal closed structure $(\otimes, 1)$, whose exponential objects we denote $\ilhom{A}{B}$ and evaluation arrows $\ev~:~\ilhom{A}{B}\otimes A \lto B$; 
    %\item 
    finite biproducts $(\&, \top)$ (with projections $\pi_i$); 
    %\item 
    a strong monoidal comonad $(!:\cC\to\cC,$
    $d~:~!~\to~\mathrm{id},\, p:!\to !!)$ (resp.\ called \emph{bang}, \emph{dereliction}, \emph{digging});
    natural transformations $\ctr:!\to !\otimes !$ (\emph{contraction}) and $\wkn:!\to !\top$ (\emph{weakening}) making $!$ a storage modality; 
    %\item 
    isomorphisms $!A\otimes !B \simeq !(A\& B)$ and $1\simeq !\top$ making $\cC$ Seely; 
    %\item 
    a natural transformation $\coder:\id \to !$ (\emph{codereliction})  making $\cC$ differential storage; 
    %\item 
    an involutive functor $\dual{(\_)}:\cC^{\mathrm{op}}\to \cC$ making $\cC$ $\star$-autonomous with a natural bijection $\chi:\cC(D\otimes E,F) \simeq \cC(D,\ilhom{\dual F}{\dual E})$.
%\end{itemize}
This means that a series of equations are required, for which we refer to standard references (e.g.\ \cite{mellies_cat_mod_LL}).
We systematically use the notation $A\lto B$ for arrows in $\cC$.
\end{note}

In such setting we can make sense of differential notions, as we describe in the note below.
We highlight in our presentation the inspiration from Differential Geometry.

\begin{note}
It is well-known that with the above data one can also define natural transformations $\coctr: !\otimes ! \to !$ (\emph{cocontraction}), $\cowkn: !\top \to !$ (\emph{coweakening}) and
%one can show that $\ilhom A B = \dual{(A\otimes \dual B)}$. 
$\partial:%A\otimes !A \overset{\coder\otimes 1}{\lto} !A\otimes !A \overset{\coctr}{\lto} !A$
\id\otimes ! \overset{\coder\otimes 1}{\lto} !\otimes ! \overset{\coctr}{\lto} !$ (\emph{deriving transformation}), and set the \emph{differential of $f:!A\lto B$ in $\cC$} be $\partial f:=A\otimes!A\overset{\partial}{\lto} !A \overset{f}{\lto} B$.
It is well-known that the coKleisli $\cC_!$ (same objects as $\cC$ and $\cC_!(A,B):=\cC(!A,B)$, representing non-linear arrows) is a Cartesian closed differential category.
We systematically use the notation $A\to B$ for arrows in $\cC_!$.
Its products are $A\times B:= A\& B$ with projections $!\pi_i;d$, its exponential objects $\ihom A B := \ilhom{!A}{B}$ and the \emph{differential of $f\in \cC_!(A,B)$ in $\cC_!$} is $Df:=d\otimes 1 ; \partial f \in \cC_!(A\times A,B)$.
%We still denote $\curry/\uncurry$ its curry/uncurry operators.
For $f:A\to B$ we have its \emph{promotion} $\prom f:=!A\overset{p}{\lto} !! A \overset{!f}{\lto} !B$. 
For (finitely many) elements $a_i:\top\to A_i$ of $A_i$ (notation: $a_i:A_i$) and $f:\prod_i A_i\to B$, we define the element $f(\vec a):B$ as $1 \overset{\simeq}{\lto} \bigotimes_i 1 \overset{\bigotimes_i \prom a_{\!\!i}}{\lto} \bigotimes_i !A_i \overset{f}{\lto} B$.
For $f:A\to B$ we call its \emph{pushforward} the arrow $\pshfwd f:A\overset{\curry \partial f}{\to} \ilhom{A}{B}$ and $\pshfwd f a:A\overset{\uncurry(\pshfwd f(a))}{\lto} B$ its pushforward at $a:A$.
Finally, for $f: A\lto \ilhom{E}{F}$ and $e:E$, we let $\restr{f}{e} := A \overset{\simeq}{\lto} A\otimes 1 \overset{1\otimes e}{\lto} A\otimes E \overset{\uncurry f}{\lto} F$.
\end{note}

\begin{figure}[t]
  \centering\boxed{
    \begin{tabular}{c}
        $\begin{array}{c}
             \textit{For } \vdashPdial M:(W(E)\to W(F)) \times (W(E)\times C(F)\to \pmon{C(E)}) \textit{ and } f:[E\to F]\textit{,
        we set} \\ \\
        M \sim_{E\to F} f \textit{ iff 
        for all } H\sim_E e\textit{, we have } \\ \\
        M^1 H \sim_F \restr{f}{e} : F
        \quad\textit{ and } \quad
            \lambda \pi. M^2\langle H,\pi\rangle \bowtie_F^E
            (\quad \uncurry f:E\to F\quad,\quad
            \dual{(\pshfwd{(\uncurry f)}{e})} : \dual F\lto \dual E\quad ) \\ \\
        \end{array}$
         \\ \hline \\
        $\begin{array}{c}
        \textit{For } \vdashPdial M:W(E)\times C(F)\to \pmon{W(A)}, f:A\to [E\to F], g:[E\to F]\lto A\textit{, we set } \\ \\
        M \bowtie_{E\to F}^A (f,g) \textit{ iff
        for all } H\sim_E e\textit{, we have } \\ \\
        \lambda\pi.M\langle H,\pi\rangle \bowtie_{F}^A
            (\quad \restr{f}{e}:A\to F \quad,\quad
            {\restr{\dual g}{e}}^{\!\!\perp} : \dual F\lto \dual A \quad ) \\ \\
        \end{array}$
    \end{tabular}}
    \caption{Definition \ref{def:log_rels} of relations $\sim$ and $\bowtie$, inductive case $B=E\to F$ (this is the only case).}
    \label{fig:log_rel}
\end{figure}

We fix now an interpretation of ground simple types in $\cC$ extended in the canonical way to all simple types (we still write $A$ for the interpretation of the simple type $A$ in $\cC$).
Taking inspiration from \cite{KerjeanPedrot24}, we define two logical relations $\sim$ and $\bowtie$ relating a closed Dialectica-transformed program of $\Pdial$ (i.e.\ the image of a proof under Dialectica) with arrows, of the suited type, in the ambient category.

\begin{definition}\label{def:log_rels}
    Given, for any ground type $\alpha$ and simple type $A$, two relations $\sim_{\alpha}\subseteq \set{\vdashPdial M:\alpha}\times \cC_!(\top,\alpha)$ and $\bowtie_\alpha^A \subseteq \set{\vdashPdial M:\alpha \to \mathcal{M}_A}\times \cC_!(A,\alpha)\times \cC(\dual\alpha,\dual A)$, we lift them at all simple types $B$ in order to get relations
    \[\begin{array}{lcccc}
        \sim_{B} & \subseteq & \set{\vdashPdial M:W(B)} & \times & \cC_!(\top,B) \\
        \bowtie_B^A & \subseteq & \set{\vdashPdial M:C(B) \to \pmon{C(A)}} & \times & \cC_!(A,B)\times \cC(\dual B,\dual A) 
    \end{array}\]
    defined by mutual induction on $B$ as in Figure \ref{fig:log_rel}.
\end{definition}

The expert differential $\lambda$-calculist could relate the arrows in Definition~\ref{def:log_rels} and the terms of \cite[Def.\ 4.8]{KerjeanPedrot24}, or wait Proposition~\ref{prop:Pm_to_me} where we show in which sense the two constructions are equivalent.
But (one of) the points of our definition is to make these constructions explicit using the familiar operations of pushforwards and dual maps.
Finally, remembering cotangent spaces and their dependently typed nature, the $g$ in
$M \bowtie_B^A (f,g)$ should be intuitively understood as $g:T_{f(a)}^*B \lto T_a^*A$, for an $a\in A$.
%In Corollary \ref{} we will see how we recover the dependency on $a$.
%This raises the question of what the tangent space $T_f[E\to F]$ at a function $f:E\to F$ of a function space $[E\to F]$ should be. One can guess that the correct answer is $T_f[E\to F]:=\prod\limits_{e\in E} [T_e E \lto T_{f(e)} F]$ in general. This definition, as it is, is not compatible with what we wrote. But this is actually not a problem. In fact, this would be in the manifold case, but the differential $\lambda$-calculus and reverse differential categories, generalise the case of a euclidean space, thus Euclidean as a manifold and the tangent space $T_a A$ trivialises to $A$ and the tangent bundle trivialises to $TA\simeq A\times A$. In this situation, the D-Curry axiom of a differential category entails that we have $T_f[E\to F]=\prod\limits_{e\in E} T_{f(e)} F$, the missing tangent vector in $E$ at $e$ being chosen to be $0$ by default (this is precisely what the D-Curry axiom requires). This is now compatible (intuitively) with what we wrote. This also entails that $T[E\to F]=E \to TF$ (which is also found on Gallagher's thesis).

%\paragraph{The theorem}

%For $a_i:A_i$ ($i=1,\dots,n$), let $\mathrm{eval}_{\vec a}:\ihom{A_1}{\ihom{A_2}{\cdots\to\ihom{A_n}{B}}}\to B$ be the composition
%$[1,n]\overset{d}{\to}[1,n]\overset{\restr{1}{\prom a_{\!\!1}}}{\to}[2,n]\overset{\restr{1}{\prom a_{\!\!2}}}{\to}\cdots\overset{\restr{1}{\prom a_{\!\!n-1}}}{\to}[n,n]\overset{\restr{1}{\prom a_{\!\!n}}}{\to} B$,
%where we put $[i,n]:=\ihom{A_i}{\ihom{A_{i+1}}{\cdots\to\ihom{A_n}{B}}}$.

\begin{lemma}\label{lm:lift}
    Suppose $\sim_{\alpha}, \bowtie_\alpha^A$ are 
    closed w.r.t.\ the rules of Figure \ref{fig:rules}.
    %compatible with the equational equivalence on $\Pdial$ in the sense that:
    %\[
    %\dfrac{M\sim_\alpha f}{N\sim_\alpha f}(\textit{if }M=N)
    %\qquad
    %\dfrac{M\bowtie_\alpha^A (f,g)}{N\bowtie_\alpha^A (f,g)}(\textit{if }M=N)
    %\]
    %and are closed w.r.t.\ the following rules:
    %\[
    %\dfrac{}{\lambda \pi.0 \bowtie_\alpha^A \binom{f:A\to \alpha}{0:\dual\alpha \lto \dual A}}(0)
    %\qquad\quad
    %\dfrac{M_1 \bowtie_\alpha^A \binom{f:A\to \alpha}{ g_1:\dual\alpha \lto \dual A} \qquad M_2 \bowtie_\alpha^A
    %\binom{f:A\to \alpha}{ g_2:\dual\alpha \lto \dual A}}{\lambda \pi.(M_1\pi\plus  M_1\pi) \bowtie_\alpha^A \binom{f:A\to \alpha}{ g_1+g_2:\dual\alpha \lto \dual A}}(+)
    %\]
    %\[
    %\dfrac{M_1 \sim_{A_1} a_1:A_1 \quad \overset{(n\geq 1)}{\dots} \quad M_n \sim_{A_n} a_n:A_n}{\lambda \pi. [\langle M_1,\dots,M_n,\pi \rangle]
    %\bowtie_\alpha^A
    %\binom{\mathrm{eval}_{\vec a}:[A_1\to [A_2\to \dots \to [A_n\to \alpha]]]\to \alpha}{\dual{(\pshfwd{(\mathrm{eval}_{\vec a})}0)}:
    %\dual\alpha \lto [[[
    %\dual{\alpha} \lto 
    %\dual A_n] \lto
    %\dual A_{n-1}] \lto 
    %\dots \lto \dual A_1
    %]}}(\mathrm{eval})
    %\]
    %\[
    %\dfrac{ G \bowtie_D^A (h,g) \qquad M \bowtie_\alpha^D (f,s) }{\lambda \pi.(M\pi \bind G) \bowtie_\alpha^A (\prom h;f,s;g)}(\bind)
    %\qquad
    %\dfrac{}{\lambda\pi.[\pi] \bowtie_\alpha^\alpha (d,1)}(d)
    %\]
    Then the same holds for $\sim_B$ and $\bowtie_B^A$ for all simple types $B$.
\end{lemma}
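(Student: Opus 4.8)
The plan is to argue by structural induction on the simple type $B$. The base case $B=\alpha$ is exactly the hypothesis, since there $\sim_B=\sim_\alpha$ and $\bowtie_B^A=\bowtie_\alpha^A$. For the inductive step $B=E\to F$ I take as induction hypothesis that, for every type strictly inside $B$ (in practice $E$ and $F$), every simple type $A$, and --- in the case of the $(\mathrm{eval})$ rule --- every arity $n$, the relations $\sim_F$, $\bowtie_F^A$ and $\bowtie_F^E$ are closed under the rules of Figure \ref{fig:rules}. I then check each rule at $E\to F$ by unfolding Definition \ref{def:log_rels}. Note that $\sim$ carries only the $=$-rule, so its treatment is subsumed by the one for $\bowtie$.

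The uniform recipe is: fix an arbitrary $H\sim_E e$; unfold $\bowtie_{E\to F}^A$ (resp.\ $\sim_{E\to F}$) into the corresponding statement about $\bowtie_F^A$ (resp.\ about $\sim_F$ and $\bowtie_F^E$); reduce the $\Pdial$-term occurring there by $\beta\eta$ and the monad/monoid equations so that it takes the shape dictated by the rule at $F$; simplify the semantic data using that $\dual{(\_)}$ and $\restr{\_}{e}$ are additive and that $\dual{(\_)}$ is contravariantly functorial and involutive; and finally recognise the goal as an instance of an induction-hypothesis rule at $F$.

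For the purely structural rules this reduces each rule to the same rule at $F$. Closure under $=$ follows from closure under $=$ at $F$ by congruence of the equational theory applied to the projections and applications appearing after unfolding; the rules $(0)$ and $(+)$ follow from $(0)$ and $(+)$ at $F$, using $\restr{0}{e}=0$ and additivity of $\restr{\_}{e}$ and of dualisation; and $(\bind)$ follows from $(\bind)$ at $F$ once one checks the two compatibilities $\restr{(\prom h;f)}{e}=\prom h;\restr{f}{e}$ (restriction commutes with precomposition) and $(\restr{\dual{(s;g)}}{e})^\perp=(\restr{\dual s}{e})^\perp;g$ (contravariant functoriality and involutivity of $\dual{(\_)}$ together with naturality of restriction). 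The two genuinely differential rules behave differently, and this is the interesting part of the induction: after the reduction turning the unfolded term into $\lambda\rho.[\langle H,\rho\rangle]$, the rule $(d)$ at $E\to F$ is \emph{not} $(d)$ at $F$ but exactly the instance of $(\mathrm{eval})$ at $F$ with $n=1$ and $a_1=e$; and $(\mathrm{eval})$ at $E\to F$ with arity $n$ becomes $(\mathrm{eval})$ at $F$ with arity $n+1$, the new argument being $H\sim_E e$. This is licit since $F$ is a strict subtype and the induction hypothesis is taken for all arities.

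I expect the main obstacle to be precisely the identification of the reverse-differential component in these two $\mathrm{eval}$-type cases. Concretely, one must show that restriction at $e$ is compatible with partial evaluation and with the pushforward/duality data: namely $\restr{\mathrm{eval}_{\vec a}}{e}=\mathrm{eval}_{(\vec a,e)}$ and, after using $\dual{\dual{(\_)}}=\id$, that the arrow $(\restr{\pshfwd{(\mathrm{eval}_{\vec a})}{0}}{e})^\perp$ coincides with the cotangent map $\dual{(\pshfwd{(\mathrm{eval}_{(\vec a,e)})}{0})}$ prescribed by $(\mathrm{eval})$ at $F$ (the case of $(d)$ being the subcase with $\vec a$ empty and $\mathrm{eval}_{\emptyset}=1_{E\to F}$). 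This is the one place where the interaction between the pushforward, the involutive duality $\dual{(\_)}$ and the closed structure of $\cC$ genuinely enters; everything else is bookkeeping of $\beta\eta$-monad equalities on the syntactic side and of additivity and functoriality on the semantic side.
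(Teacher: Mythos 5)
Your proposal is correct and takes essentially the same route as the paper's proof: a rule-by-rule induction on $B$ in which $(d)$ and $(\mathrm{eval})$ are handled by mutual induction --- $(d)$ at $E\to F$ reducing (via $\restr{d}{e}=\mathrm{eval}_{e}$) to the instance of $(\mathrm{eval})$ at $F$ with $n=1$, $a_1=e$, and $(\mathrm{eval})$ at $E\to F$ reducing (via $\restr{\mathrm{eval}_{\vec a}}{e}=\mathrm{eval}_{\vec a,e}$) to $(\mathrm{eval})$ at $F$ with arity $n+1$ --- and with exactly the same compatibility equations for $(0)$, $(+)$ and $(\bind)$. The only difference is presentational: you make explicit the second-component check relating $\dual{\restr{(\pshfwd{(\mathrm{eval}_{\vec a})}{0})}{e}}$ to $\dual{(\pshfwd{(\mathrm{eval}_{\vec a,e})}{0})}$, which the paper leaves implicit in its citation of $\restr{\mathrm{eval}_{\vec a}}{e}=\mathrm{eval}_{\vec a,e}$.
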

\begin{proof}%[Proof of Lemma \ref{lm:lift}]
    Each rule is proved separately by induction on $B$, except rules $(\ev)$ and $(d)$ which are proved by mutual induction on $B$.
    The lift of the compatibility with equational equivalence is immediate.
    The others are all straightforward using the equational semantics of $\Pdial$ \cite[Def.\ 4.1]{KerjeanPedrot24} and equations which hold in $\cC$:
    $(0)$ uses $\dual 0=0$.
    $(+)$ uses $\dual{(f+g)}=\dual f+\dual g$, $\uncurry{(f+g)}=\uncurry f + \uncurry g$ and $f;(g+h)=f;g+f;h$.
    $(\bind)$ uses $\restr{(\prom h;f)}{e}=\prom h;\restr{f}{e}$ and $\dual{\restr{\dual{(s;g)}}{e}}=\dual{\restr{\dual{s}}{e}};g$.
    $(d)$ uses the inductive hypothesis on $(\mathrm{eval})$ and $\mathrm{eval}_{e}=\restr{d}{e}$.
    $(\mathrm{eval})$ uses the fact that $\restr{\mathrm{eval}_{\vec a}}{e}=\mathrm{eval}_{\vec a,e}$.
\end{proof}

\begin{figure}[t]
    \centering
    \boxed{
    \begin{tabular}{lcr}
    $\dfrac{M\sim_\alpha f}{N\sim_\alpha f}(\textit{if }M=N)$     & \multicolumn{2}{l}{\quad $\dfrac{M\bowtie_\alpha^A (f,g)}{N\bowtie_\alpha^A (f,g)}(\textit{if }M=N)$    \quad    $\dfrac{ G \bowtie_D^A (h,g) \qquad M \bowtie_\alpha^D (f,s) }{\lambda \pi.(M\pi \bind G) \bowtie_\alpha^A (\prom h;f,s;g)}(\bind)$}     \\
    \\
    $\dfrac{}{\lambda \pi.0 \bowtie_\alpha^A \binom{f:A\to \alpha}{0:\dual\alpha \lto \dual A}}(0)$     & $\dfrac{M_1 \bowtie_\alpha^A \binom{f:A\to \alpha}{ g_1:\dual\alpha \lto \dual A} \qquad M_2 \bowtie_\alpha^A
    \binom{f:A\to \alpha}{ g_2:\dual\alpha \lto \dual A}}{\lambda \pi.(M_1\pi\plus  M_2\pi) \bowtie_\alpha^A \binom{f:A\to \alpha}{ g_1+g_2:\dual\alpha \lto \dual A}}(+)$     & $\dfrac{}{\lambda\pi.[\pi] \bowtie_\alpha^\alpha (d,1)}(d)$        \\
    \\
    \multicolumn{3}{c}{$\dfrac{M_1 \sim_{A_1} a_1:A_1 \quad \overset{(n\geq 1)}{\dots} \quad M_n \sim_{A_n} a_n:A_n}{\lambda \pi. [\langle M_1,\dots,M_n,\pi \rangle]
    \bowtie_\alpha^A
    \binom{\mathrm{eval}_{\vec a}:[A_1\to [A_2\to \dots \to [A_n\to \alpha]]]\to \alpha}{\dual{(\pshfwd{(\mathrm{eval}_{\vec a})}0)}:
    \dual\alpha \lto [[[
    \dual{\alpha} \lto 
    \dual A_n] \lto
    \dual A_{n-1}] \lto 
    \dots \lto \dual A_1
    ]}}(\mathrm{eval})$} \\ \\
    \end{tabular}
    }
    \caption{%For $a_i:A_i$ ($i=1,\dots,n$), we let 
    In $(\mathrm{eval})$, we let $\mathrm{eval}_{\vec a}:\ihom{A_1}{\ihom{A_2}{\cdots\to\ihom{A_n}{B}}}\to B$ be the composition 
    $[1,n]\overset{d}{\to}[1,n]\overset{h_1}{\to}[2,n]%\overset{\restr{1}{\prom a_{\!\!2}}}{\to}
    \to \cdots%\overset{\restr{1}{\prom a_{\!\!n-1}}}{\to}
    \to [n,n]\overset{h_n}{\to} B$, where 
    $[i,n]:=\ihom{A_i}{\ihom{A_{i+1}}{\cdots\to\ihom{A_n}{B}}}$ and $h_j:=\restr{1_{A_j}}{\prom a_{\!\!j}}$.}
    \label{fig:rules}
\end{figure}

For $f:\prod_i A_i\to B$ and $a_i:A_i$ for $i=1,\dots,j-1,j+1,\dots,n$, we let $f_{\vec a}^j:=!A_j\simeq \bigotimes_{1}^{j-1} 1\otimes \,!A_j\otimes \bigotimes_{1}^{n-j}  1 \lto \bigotimes_i !A_i \overset{f}{\lto} B$, where the unnamed arrow is $\bigotimes_i \prom a_{\!\!i}\otimes 1\otimes \bigotimes_i \prom a_{\!\!i}$.
This corresponds to $f$ where we fixed all inputs at the $a_i$'s but the $j$-th one.

%Remembering that, with the notations of the following lemma, $c;(p\otimes\prom q);\ev$ is the composition of $p$ and $q$ in $\cC_!$, 
The following statement expresses the chain rule in its pushforward form\footnote{The usual undergraduate chain rule is obtained when $p$ does not depend on $A_j$.}.
The differential $\lambda$-calculist will notice that it precisely corresponds to the \emph{linear substitution} of an application.

\begin{lemma}\label{lm:chain_rule}
    Let $p:\prod_i A_i\to \ihom E F$ and $q:\prod_i A_i \to E$.
    Let us momentarily write $q\,;_!p:=
    \prod_i A_i %\simeq \bigotimes_i !A_i 
    \overset{c}{\lto} \bigotimes_i !A_i \otimes \bigotimes_i !A_i \overset{p\otimes\prom q}{\lto} \ihom E F \otimes !E \overset{\ev}{\lto} F$, which is the ``evaluation'' of $p$ at $q$ in $\cC_!$.
    Fix $j$ and $a_i:A_i$ ($1\leq i\neq j\leq n$).
    In $\cC$ we have:
    \[{\pshfwd{((q\,;_!p)_{\vec a^j}^j)}a_j}
    =
    {\restr{\pshfwd{(p_{\vec a^j}^j)}{a_j}}{q(\vec a)}} 
    +\,
    {(\pshfwd{(q_{\vec a^j}^j)}{a_j}}\,;\,{(\pshfwd{\uncurry{(p(\vec a))})}{q(\vec a)})}.\]
\end{lemma}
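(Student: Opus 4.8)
The plan is to read the composite as the evaluation of a coKleisli pairing and then apply the Cartesian differential chain rule together with the differential of evaluation. Writing $\ev$ now for the evaluation $\ihom E F\times E\to F$ of the Cartesian closed differential category $\cC_!$, the contraction $c$ followed by $p\otimes\prom q$ is exactly the coKleisli pairing, so that $q\,;_!p=\langle p,q\rangle;\ev$. I will argue on generalized elements for readability, each pointwise identity below being the elementwise reading of a morphism equation in $\cC$ that one checks through the deriving-transformation axioms.

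First I will differentiate $\langle p,q\rangle;\ev$. Since $\cC_!$ is a Cartesian differential category and $D$ commutes with pairing, the chain rule gives $D(q\,;_!p)(a,v)=D\ev\bigl((p(a),q(a)),(Dp(a,v),Dq(a,v))\bigr)$. The crucial ingredient is the differential of evaluation, which on generalized elements reads
\[
D\ev\bigl((\phi,e),(\psi,w)\bigr)=\psi(e)+\bigl(\pshfwd{(\uncurry\phi)}{e}\bigr)(w),
\]
expressing that $\ev$ is linear in its function slot (yielding $\psi(e)$) while in its point slot it contributes the pushforward of $\phi$ at $e$ (yielding the second term). This is precisely the $\stdlc$ rule for the linear substitution of an application, and it is what splits the chain rule into the two terms of the statement.

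Second I will specialise to the partial application $(\_)_{\vec a^j}^j$, fixing the variables other than the $j$-th at the $a_i$ and reading off the differential at the base point $a_j$. Tracking base points, $\phi$ becomes $p(\vec a)$ and $e$ becomes $q(\vec a)$, while $\psi=\pshfwd{(p_{\vec a^j}^j)}{a_j}(v)$ and $w=\pshfwd{(q_{\vec a^j}^j)}{a_j}(v)$. As a function of $v$, the function-slot contribution is then $\pshfwd{(p_{\vec a^j}^j)}{a_j}$ (landing in $\ihom E F=\ilhom{!E}{F}$) evaluated at $q(\vec a)$, i.e.\ exactly $\restr{\pshfwd{(p_{\vec a^j}^j)}{a_j}}{q(\vec a)}$; the point-slot contribution is $\pshfwd{(q_{\vec a^j}^j)}{a_j}$ followed by $\pshfwd{\uncurry{(p(\vec a))}}{q(\vec a)}$. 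The two terms of the evaluation differential thus produce exactly the two summands of the claimed identity.

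The main obstacle is to justify the differential of evaluation at the level of the deriving transformation $\partial$ in $\cC$, rather than citing it as a black box. Unfolding the coKleisli $\ev$ through promotion $\prom{(\_)}$ and the linear evaluation $\ilhom{!E}{F}\otimes{!E}\lto F$ of the monoidal closed structure, one must differentiate through a promotion, which invokes the chain-rule axiom relating $\partial$, codereliction $\coder$ and digging $p$; the two contributions then separate exactly as the Leibniz rule for $\partial$ (its additivity with respect to contraction $c$) dictates, and this is the genuine source of the ``$+$''. Everything else—commuting the plumbing $(\_)_{\vec a^j}^j$ past $\partial$ by naturality, and checking that the base points land correctly through $\restr{(\_)}{e}$ and the Seely isomorphisms—is routine bookkeeping once this single evaluation identity is in place.
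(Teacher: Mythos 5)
Your proof is correct, and it is essentially the argument the paper leaves implicit: the paper gives no proof of this lemma at all, asserting only (in the paragraph preceding it) that the statement is the chain rule in its pushforward form, i.e.\ the linear substitution of an application in the differential $\lambda$-calculus. Your decomposition $q\,;_!p=\langle p,q\rangle;\ev$, the Cartesian differential chain rule, and the differential-of-evaluation identity (justified through the deriving transformation, with the Leibniz rule for contraction supplying the ``$+$'' and the chain-rule axiom for promotion supplying the second summand) are exactly the standard facts that remark appeals to.
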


From now on, we fix an interpretation $\llbracket\cdot\rrbracket:ST\lambda C\to\cC_!$ and ground relations $\sim,\bowtie$ satisfying the hypotheses of Lemma \ref{lm:lift}.

\begin{theorem}\label{th:main}
    Let $f:=\llbracket x:A_1,\dots,x:A_n\vdash_{\stlc} M:B \rrbracket : \prod_{i=1}^n A_i \to B$.
    
    For all $\vdash_{\stlc} N_i:A_i$
    and $a_i:A_i$ s.t.\ $N_i \sim_{A_i} a_i$ ( $i=1,\dots,n$), setting $\vec a:=a_1,\dots,a_n$ and $\vec a^j:=a_1,\dots,a_{j-1},a_{j+1},\dots,a_n$, we have:
    \begin{enumerate}
        \item $M^\bullet\set{\vec N/\vec x} \sim_B f(\vec a)$.
        \item If $1\leq j\leq n\neq 0$, the following rule is admissible for all simple type $Y$:
    \[
    \dfrac{G \bowtie_{A_j}^{Y} ( \, h:!Y\lto A_j\, , \, g: \dual A_j\lto \dual Y \, )}
    {\lambda \pi. ((M_{x_j}\set{\vec N/\vec x})\pi \bind G) \bowtie_{B}^{Y}
    (\,\prom h;f_{\vec a^j}^j:!Y\lto B\,\,,\,\,
    \dual{(\pshfwd{(f_{\vec a^j}^j)}a_j)}; g : \dual B \lto \dual Y
    \,)}
    \]
    \end{enumerate}
\end{theorem}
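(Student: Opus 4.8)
The plan is to establish items (1) and (2) \emph{simultaneously}, by structural induction on the $\stlc$-term $M$, relying on Lemma~\ref{lm:lift} so that the closure rules of Figure~\ref{fig:rules} are available for $\sim_B$ and $\bowtie_B^A$ at \emph{every} simple type; these rules, the equational semantics of $\Pdial$ (notably the monad unit/associativity laws and $0\bind G=0$) and the equations in $\cC$ collected in the proof of Lemma~\ref{lm:lift} are the only facts about the lifted relations I would use. The two items are genuinely interdependent: for $B=E\to F$, unfolding the arrow clause of $\sim_{E\to F}$ (resp.\ of $\bowtie_{E\to F}^Y$) turns an obligation of (1) (resp.\ (2)) into a $\bowtie_F$-obligation over the context extended by $x:E$, which is an instance of (2) for the strictly smaller body, while the application case of (2) consumes both (1) and (2) on the two immediate subterms and is driven by the chain rule of Lemma~\ref{lm:chain_rule}.

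For the base case $M=x_k$ one has $f=\pi_k$, so $f(\vec a)=a_k$ and (1) is precisely the hypothesis $N_k\sim_{A_k}a_k$. For (2) I would split on $k$: if $k=j$, then $f_{\vec a^j}^j$ is dereliction (up to the canonical isomorphisms), whence $\prom h;f_{\vec a^j}^j=h$ and $\pshfwd{(f_{\vec a^j}^j)}{a_j}=\id$, so the reverse component is $\dual{\id};g=g$; since $\cdial{x_j}{x_j}=\lambda\pi.[\pi]$ the left-unit law collapses $\lambda\pi.([\pi]\bind G)$ to $G$, and the goal is literally the premise $G\bowtie_{A_j}^Y(h,g)$. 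If $k\neq j$, then $f_{\vec a^j}^j$ is constant in its argument, so $\pshfwd{(f_{\vec a^j}^j)}{a_j}=0$ and the reverse component is $0;g=0$; as $\cdial{x_k}{x_j}=\lambda\pi.0$ and $0\bind G=0$ the term equals $\lambda\pi.0$, and one concludes by rule $(0)$.

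For the abstraction $M=\lambda x.M'$ one has $B=E\to F$ and $f=\curry f'$ with $f'=\llbracket \vec x,x:E\vdash M':F\rrbracket$. Fixing $H\sim_E e$ and extending the data by $(H,e)$: the first-projection obligation of (1) reduces, via the $\beta$-step $(\lambda x.\wdial{M'}\set{\vec N/\vec x})H=\wdial{M'}\set{\vec N,H/\vec x,x}$, to $\wdial{M'}\set{\vec N,H/\vec x,x}\sim_F \restr{f(\vec a)}{e}$, and since $\restr{f(\vec a)}{e}=f'(\vec a,e)$ this is (1) of the hypothesis for $M'$. The second-projection obligation of (1) is the instance of (2) of the hypothesis for $M'$ at $j=n+1$, $Y=E$, $G=\lambda\pi.[\pi]$, $(h,g)=(d,\id)$ (legitimate by rule $(d)$): the left-unit law erases the auxiliary $\bind$, and the identities $\prom d=\id$ and $(f')_{\vec a}^{n+1}=\uncurry(f(\vec a))$ make the witness/reverse pair coincide on the nose. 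Statement (2) for the abstraction is handled the same way, by unfolding $\bowtie_{E\to F}^Y$, extending by $x:E$ and invoking (2) of the hypothesis for $M'$ at $x_j$. For the application $M=PQ$ with $P:E\to F$ and $Q:E$, statement (1) follows by feeding the witness $\wdial{Q}\set{\vec N/\vec x}\sim_E \llbracket Q\rrbracket(\vec a)$, given by (1) for $Q$, into the first-projection clause of $\wdial{P}\set{\vec N/\vec x}\sim_{E\to F}\llbracket P\rrbracket(\vec a)$, given by (1) for $P$; this yields $\wdial{(PQ)}\set{\vec N/\vec x}\sim_F \restr{\llbracket P\rrbracket(\vec a)}{\llbracket Q\rrbracket(\vec a)}=f(\vec a)$.

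The crux is statement (2) for $M=PQ$, where
\[\cdial{(PQ)}{x_j}=\lambda\pi.\bigl(\cdial{P}{x_j}\langle\wdial Q,\pi\rangle+{\wdial P}^2\langle\wdial Q,\pi\rangle\bind\cdial{Q}{x_j}\bigr)\]
must be matched against the \emph{two} summands of the chain rule. Writing $p=\llbracket P\rrbracket$ and $q=\llbracket Q\rrbracket$, so that $f=q\,;_!p$, I would dualize Lemma~\ref{lm:chain_rule}: since $\dual{(\_)}$ is contravariant and additive,
\[\dual{(\pshfwd{(f_{\vec a^j}^j)}{a_j})}=\dual{(\restr{\pshfwd{(p_{\vec a^j}^j)}{a_j}}{q(\vec a)})}+\dual{(\pshfwd{\uncurry(p(\vec a))}{q(\vec a)})}\,;\,\dual{(\pshfwd{(q_{\vec a^j}^j)}{a_j})},\]
and then split with rule $(+)$. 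The first summand is discharged by (2) of the hypothesis for $P$ (a $\bowtie_{E\to F}^Y$ statement) unfolded at the witness $\wdial Q\set{\vec N/\vec x}\sim_E q(\vec a)$, using the equation $\dual{\restr{\dual{(s;g)}}{e}}=\dual{\restr{\dual{s}}{e}};g$ from the proof of Lemma~\ref{lm:lift} to recover the restricted dual. The second summand is discharged by the $(\bind)$ rule after re-associating the nested binds: its left premise is the second-projection clause of $\wdial P\set{\vec N/\vec x}\sim_{E\to F}p(\vec a)$, supplying the middle reverse map $\dual{(\pshfwd{\uncurry(p(\vec a))}{q(\vec a)})}$, and its right premise is (2) of the hypothesis for $Q$, supplying $\dual{(\pshfwd{(q_{\vec a^j}^j)}{a_j})};g$, which $(\bind)$ composes in the reversed order predicted above. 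I expect this final matching to be the main obstacle: the bookkeeping must ensure that, once $\cdial{(PQ)}{x_j}$ is bound with the external $G$, its syntactic sum-and-$\bind$ lines up term by term with the dualized chain rule, and that the $(\bind)$-rule composition reproduces exactly the reversed order of the two reverse maps. By contrast, the variable and abstraction cases are comparatively routine $\beta\eta$- and monad-law bookkeeping once the arrow clauses are unfolded.
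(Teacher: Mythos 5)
Your proposal is correct and follows essentially the same route as the paper's proof: simultaneous induction on $M$, with the variable case discharged by rule $(0)$, the monad laws and $\prom h;d=h$, the abstraction case reduced to the inductive hypothesis (2) at the extended context instantiated with $(\lambda\pi.[\pi],d,1)$ via rule $(d)$, and the application case split by rule $(+)$ using the dualized chain rule of Lemma~\ref{lm:chain_rule}, whose second summand is assembled by rule $(\bind)$ from the second-projection clause of IH1 on $P$ and IH2 on $Q$, in exactly the reversed composition order the paper obtains. The only differences are presentational: you spell out the dualization (contravariance plus additivity) and the monad-law bookkeeping that the paper leaves implicit, while glossing as routine the restriction/pushforward commutation equations that the paper states explicitly in the abstraction case.
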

\begin{proof}
    Induction on $M$. Call (IH1), (IH2) the inductive hypotheses for claim 1,2.
    \begin{description}
        \item[Case $M=x_i$.]
    Then $f=\pi_i$.
    
    1). Our goal becomes $N_i\sim_{A_i} a_i$ which is in our hypotheses.
    
    2). %We have two cases: either $j=i$ or $j\neq i$.
    If $j=i$, we have $f_{\vec a^j}^j=d_{A_j}$ and one can show that $\pshfwd{(f_{\vec a^j}^j)}a_j=\pshfwd{d}{a_j}=1$.
    Our goal then becomes $G\bowtie_{A_j}^Y (h,g)$, which is precisely the premise of our rule.
    If $j\neq i$, We have $f_{\vec a^j}^j=\prom w_{\!\!A_j};a_i$ and one can show that $\pshfwd{(f_{\vec a^j}^j)}a_j=\pshfwd{(\prom w_{\!\!A_j};a_i)}{a_j}=0$. % using the fact that $\coder;w=0$.
    Our goal then becomes $\lambda \pi.0 \bowtie_{A_i}^Y (\prom h;f_{\vec a^j}^j\,,\,0)$, which is given by $(0)$.

    \item[Case $M=\lambda y.Q$, $B=E\to F$].
    Then there is $\uncurry f=\llbracket \vec x:\vec A,y:E\vdash_{\stlc} Q:F \rrbracket$.
    
    1).
    We have to show that, given $H\sim_E e$, we have both $\wdial Q\set{\vec N/\vec x,H/y} \sim_F \restr{f(\vec a)}{e}$ and $\cdial{Q}{y}\set{\vec N/\vec x,H/y} \bowtie_F^E (\,
    \uncurry{(f(\vec a))}\,\,,\,\,
    \dual{(\pshfwd{(\uncurry{(f(\vec a))})}{e})}\,)$.
    The former is given by (IH1), since $\restr{f(\vec a)}{e}=(\uncurry f)(\vec a, e)$.
    For the latter we have $\cdial{Q}{y}\set{\vec N/\vec x,H/y} = \lambda\rho.(\cdial{Q}{y}\set{\vec N/\vec x,H/y} \rho \bind \lambda\eta.[\eta])$ so, using rule $(d)$, this is precisely given by (IH2), since $\prom d_{\!\!E};(\uncurry f)^{n+1}_{\vec a}=\uncurry{(f(\vec a))}$.
    
    2).
    Given $G \bowtie_{A_j}^{Y} ( \, h:!Y\lto A_j\, , \, g: \dual A_j\lto \dual Y \, )$ and $H\sim_E e$, putting $P:=M_{x_j}\set{\vec N/\vec x})\pi \bind G$, our goal is: $\widetilde P \bowtie_F^Y 
    (\,\restr{(\prom h;f_{\vec a^j}^j)}{e}\,,\,
    \dual{\restr{\dual{(\dual{(\pshfwd{(f_{\vec a^j}^j)}{a_j})};g)}}{e}}\,)
    $, where we put $\widetilde P:=\lambda\rho.(\lambda \pi. P)\langle H,\rho \rangle$.
    Since $P=(\lambda y.\cdial{Q}{x_j}\set{\vec N/\vec x})\pi^1\pi^2\bind G$, we have $\widetilde P=\lambda\rho.(\lambda \pi. P)\langle H,\rho \rangle=\lambda \rho.(\cdial{Q}{x_j}\set{\vec N/\vec x,H/y}\rho\bind G)$.
    Now, by (IH2) on $Q$ with $(G,h,g)$, we obtain $\widetilde P\bowtie_F^Y (\, \prom h;(\uncurry f)^j_{\vec a^j,e} \,,\, \dual{(\pshfwd{((\uncurry f)^j_{\vec a^j,e})}{a_j})};g \,)$.
    To conclude, one can see that $\restr{(\prom h;f_{\vec a^j}^j)}{e}=\prom h;\restr{f_{\vec a^j}^j}{e}=\prom h;(\uncurry f)^j_{\vec a^j,e}$ and $\restr{(\pshfwd{(f_{\vec a^j}^j)}{a_j})}{e}=\pshfwd{((\uncurry f)^j_{\vec a^j,e})}{a_j}$ as well as $\dual{\restr{\dual{(\dual{(\pshfwd{(f_{\vec a^j}^j)}{a_j})};g)}}{e}}=
    %\dual{\restr{(\dual g;\pshfwd{(f_{\vec a^j}^j)}{a_j})}{e}}=
    %\dual{(\dual g;\restr{(\pshfwd{(f_{\vec a^j}^j)}{a_j})}{e})}=
    \dual{\restr{(\pshfwd{(f_{\vec a^j}^j)}{a_j})}{e}};g
    %=\dual{(\pshfwd{((\uncurry f)^j_{\vec a^j,e})}{a_j})};g
    $.
    
    \item[Case $M=PQ$.]
    Then $f=c%_{\bigotimes_i !A_i}
    ;(p\otimes \prom q);\ev$, where $p=\llbracket \vec x:\vec A\vdash_\stlc P:E\to B\rrbracket$ and $q=\llbracket \vec x:\vec A\vdash_\stlc Q:E \rrbracket$.
    
    1).
    Since one sees that $f(\vec a)=\restr{p(\vec a)}{q(\vec a)}$, our goal becomes showing that $(\wdial{P}\set{\vec N/\vec x})^1(\wdial Q\set{\vec N/\vec x})$ $\sim_B \restr{p(\vec a)}{q(\vec a)}$.
    But (IH1) on $P$ gives $(\wdial{P}\set{\vec N/\vec x})^1 H\sim_{B} \restr{p(\vec a)}{e}$ for all $H\sim_E e$, and (IH1) on $Q$ gives $\wdial{Q}\set{\vec N/\vec x}\sim_{E} q(\vec a)$, so we are done.
    
    2).
    Given $G \bowtie_{A_j}^{Y} ( \, h:!Y\lto A_j\, , \, g: \dual A_j\lto \dual Y \, )$, let $R:=\lambda \eta.((\cdial{Q}{x_j}\set{\vec N/\vec x}\eta)\bind G)$, 
    $\widetilde P_{\rho}:=(\cdial{P}{x_j}\set{\vec N/\vec x}\langle \wdial Q\set{\vec N/\vec x},\rho \rangle)\bind G$
    and $\widetilde Q_{\rho}:=({\wdial{P}}^2\set{\vec N/\vec x}\langle\wdial Q\set{\vec N/\vec x},\rho\rangle)\bind R$.
    Now our goal is:
    $\lambda\pi.((\lambda\rho.\widetilde P_{\rho})\pi+(\lambda\rho.\widetilde Q_{\rho})\pi) \bowtie_B^Y
    (\,\prom h;f_{\vec a^j}^j\,\,,\,\,
    \dual{(\pshfwd{(f_{\vec a^j}^j)}a_j)}; g
    \,)
    $.
    By $(+)$ and Lemma \ref{lm:chain_rule}, it is enough showing that we have both $\lambda \rho.\widetilde P_{\rho} \bowtie_B^Y (\prom h;f_{\vec a^j}^j\,,\, \restr{\pshfwd{(p_{\vec a^j}^j)}{a_j}}{q(\vec a)}^{\!\!\!\!\!\!\!\!\!\bot}\,\,\,\,;g)$ and also $\lambda\rho.\widetilde Q_{\rho}\bowtie_B^Y$ $(\prom h;f_{\vec a^j}^j,$ $
    \dual{(\pshfwd{(\uncurry{(p(\vec a))})}{q(\vec a)})}
    ;\dual{(\pshfwd{(q_{\vec a^j}^j)}{a_j})}
    ;g\,)$.
    For the former, IH1 on $Q$ entails $\wdial Q\set{\vec N/\vec x}\sim_E~q(\vec a)$.
    So one can see that IH2 on $P$ precisely gives $\lambda \rho.\widetilde P_{\rho}\bowtie_B^Y (\,
    \restr{(\prom h;p_{\vec a^j}^j)}{q(\vec a)} \,,\,
    \restr{\dual{(\dual{(\pshfwd{(p_{\vec a^j}^j)}a_j)}; g)}}{q(\vec a)}^{\!\!\!\!\!\!\!\!\!\bot}\,\,\,\,\,)$, and it is easy to see that we obtained the desired pair of arrows.
    For the latter, on the one hand we notice that, by IH2 on $Q$, we have $R\bowtie_{B}^{E}
    (\,\prom h;q_{\vec a^j}^j\,\,,\,\,
    \dual{(\pshfwd{(q_{\vec a^j}^j)}a_j)}; g
    \,)$.
    On the other hand, by IH1 on $P$, for all $H\sim_E e$ we have $\lambda\pi.{\wdial P}^2\set{\vec N/\vec x}\langle H,\pi\rangle \bowtie_B^E (\,\uncurry{(p(\vec a))}\,,\, \dual{(\pshfwd{(\uncurry{(p(\vec a))})}{e})}\,)$.
    Now, we already remarked above that $\wdial Q\set{\vec N/\vec x}\sim_E q(\vec a)$, thus putting $S:=\lambda\pi.{\wdial P}^2\set{\vec N/\vec x}\langle \wdial Q\set{\vec N/\vec x},\pi\rangle$, we have $S\bowtie_B^E (\,\uncurry{(p(\vec a))}\,,\, \dual{(\pshfwd{(\uncurry{(p(\vec a))})}{q(\vec a)})}\,)$.
    But by rule $(\bind)$ on $R$ and $S$, we obtain $\lambda\rho.(S\rho\bind R) \bowtie_B^Y (\,
    \prom{(\prom h;q_{\vec a^j}^j)};\uncurry{(p(\vec a))}
    \,,$
    $
    \dual{(\pshfwd{(\uncurry{(p(\vec a))})}{q(\vec a)})};\dual{(\pshfwd{(q_{\vec a^j}^j)}a_j)}; g
    \,)$.
    Now since $S\rho\bind R=\widetilde Q_{\rho}$, one concludes by checking that $\prom{(\prom h;q_{\vec a^j}^j)};\uncurry{(p(\vec a))}=\prom h;f_{\vec a^j}^j$.
    \end{description}
\end{proof}

Now using rule $(d)$ as premise of the rule in Theorem \ref{th:main}(2), we obtain:

\begin{corollary}\label{cor:dial_rev}
    Under the hypotheses of Theorem \ref{th:main}(2), we have \[M_{x_j}\set{\vec N/\vec x} \bowtie_{B}^A
    (\, f_{\vec a^j}^j:A_j\to B\,\,,\,\,\dual{(\pshfwd{(f_{\vec a^j}^j)}a_j)} : \dual B\lto \dual A_j \,).\]
\end{corollary}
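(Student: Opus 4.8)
The plan is to read the Corollary as the special case of Theorem~\ref{th:main}(2) obtained by feeding the rule the \emph{identity reverse differential} of $A_j$. Concretely, I would instantiate the admissible rule at $Y := A_j$, $h := d_{A_j}$ (dereliction, i.e.\ the identity of $A_j$ in $\cC_!$) and $g := 1_{\dual A_j}$. With these choices the premise becomes $G \bowtie_{A_j}^{A_j}(d,1)$, which I would discharge by taking $G := \lambda\eta.[\eta]$: the base relations are closed under rule $(d)$ of Figure~\ref{fig:rules} by our standing hypothesis, and Lemma~\ref{lm:lift} propagates that closure to every simple type, so $\lambda\eta.[\eta] \bowtie_{A_j}^{A_j}(d,1)$ holds at the (possibly higher-order) type $A_j$, not merely at ground types.

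It then remains to rewrite the conclusion $\lambda\pi.((M_{x_j}\set{\vec N/\vec x})\pi \bind G) \bowtie_B^{A_j}(\prom h; f_{\vec a^j}^j\,,\,\dual{(\pshfwd{(f_{\vec a^j}^j)}a_j)}; g)$ under these instantiations. On the arrow side I would use the comonad counit law $p;!d = 1$ to get $\prom{d_{A_j}}; f_{\vec a^j}^j = 1_{!A_j}; f_{\vec a^j}^j = f_{\vec a^j}^j$, and the fact that $1$ is a unit for composition to get $\dual{(\pshfwd{(f_{\vec a^j}^j)}a_j)}; 1 = \dual{(\pshfwd{(f_{\vec a^j}^j)}a_j)}$. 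On the term side, the monad right-unit law $M \bind \lambda\eta.[\eta] = M$ together with $\eta$-equality yields $\lambda\pi.((M_{x_j}\set{\vec N/\vec x})\pi \bind \lambda\eta.[\eta]) = M_{x_j}\set{\vec N/\vec x}$ in the equational semantics of $\Pdial$. Since $\bowtie$ is compatible with $=$ (the first rule of Figure~\ref{fig:rules}), the conclusion rewrites exactly to $M_{x_j}\set{\vec N/\vec x} \bowtie_B^{A_j}(f_{\vec a^j}^j\,,\,\dual{(\pshfwd{(f_{\vec a^j}^j)}a_j)})$, which is the claim.

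I do not expect a genuine obstacle here; the Corollary is essentially Theorem~\ref{th:main}(2) specialised at the identity lens on $A_j$, and all the work has already been done in the Theorem. The only point requiring care is that the base case $(d)$ must be available at the arbitrary simple type $A_j$ rather than only at ground types, and this is precisely what Lemma~\ref{lm:lift} secures. Everything else reduces to the two unit laws (counit of the $!$-comonad, right unit of the $\pmon{\_}$-monad) together with $\eta$- and compatibility-equalities.
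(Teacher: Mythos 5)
Your proposal is correct and coincides with the paper's own argument, which reads in full: ``Immediate using rule $(d)$ as premise of the rule in Theorem~\ref{th:main}(2).'' You have simply spelled out the details that make it immediate --- the instantiation $Y:=A_j$, $h:=d$, $g:=1$, $G:=\lambda\pi.[\pi]$ (licensed at arbitrary $A_j$ by Lemma~\ref{lm:lift}), and the comonad-counit, composition-unit, monad-unit and $\eta$/compatibility simplifications --- exactly as the paper intends.
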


\begin{remark}\label{rmk:dial_rev}
    For $x:A\vdash_\stlc M:B$, the results above say that $\wdial{(\lambda x.M)} \sim_{A\to B} \llbracket M \rrbracket$ and $(\lambda x.\cdial{M}{x})N\bowtie_B^A (\llbracket M \rrbracket,\dual{(\pshfwd{\llbracket M \rrbracket}{a})})$ for all $N\sim_A a$.
    Remembering the reverse differential $R\llbracket M \rrbracket:(a,w)\in\plb{\llbracket M \rrbracket}{\,\cotan B}\mapsto (a,\dual{(\pshfwd{\llbracket M \rrbracket}{a})}w)\in\cotan A$ of $\llbracket M \rrbracket$, we can read it by saying that $\lambda x.\cdial{M}{x}$ ``represents'' $R\llbracket M \rrbracket$.
    Precisely spelling this out a dependently typed framework would be very interesting.
    %Remark that, in the case $B=E\to F$, by unwrapping the definition of $\sim_B$, we can think of $1)$ saying that ${M^\bullet}^1\set{\vec N/\vec x}$ behaves like the function $\uncurry{(f(\vec a))}:E\to F$ and ${M^\bullet}^2\set{\vec N/\vec x}$ like $R(\uncurry{(f(\vec a))}):f^*T^*F\to T^*E$, with the notations of the introduction.
\end{remark}

The previous two results and the remark above express the Dialectica as a differentiable program transformation in a categorical way, hopefully clarifying even more the content of \cite[Theorem 4.10]{KerjeanPedrot24} (compare also \cite[Fig.\ 6]{KerjeanPedrot24} with our Definition \ref{def:log_rels}).

\begin{remark}
    $\sim$ can be thought of as a ``proof relevant'' realisability relation: not only do we realise a formula $B$ with $\Pdial$-terms (the $M$'s such that $M\sim_B f$, \emph{for some} $f:B$), but we also cluster such realisers into classes whose terms realise a certain element $f:B$ (the statement that $M\sim_B f$).
    Theorem \ref{th:main} becomes then the usual adequacy Theorem for realisability. %: a proof $\vdash_\stlc M:B$ gives a realiser $\wdial M$ of $B$, plus the information that the realiser also realises $\llbracket M\rrbracket$ itself.
    %Dialectica is thus the program extraction transformation for such notion of realisability.
\end{remark}

\begin{figure}[t]
    \centering
    \boxed{
    \begin{tabular}{c}
     $\dfrac{M\sim_B \llbracket S\rrbracket}{M\overset{\partial\lambda}{\sim}_B S}(1)$ \quad $\dfrac{M\bowtie_B^A (f,\dual{\llbracket S\rrbracket})}{M\overset{\,\,\,\partial\lambda_A}{\bowtie}_{\!\!\!\!B} S}(2)$ \quad
     $\dfrac{M\overset{\partial\lambda}{\sim}_B S}{M\sim_B \llbracket S\rrbracket}(3)$ \quad $\dfrac{M\overset{\,\,\,\partial\lambda_A}{\bowtie}_{\!\!\!\!B} S \quad \llbracket S\rrbracket = \pshfwd{f}{a}}{M\bowtie_B^A (f,\dual{\llbracket S\rrbracket})}(4)$ \\ \\
    \end{tabular}}
    \caption{From  relations in \cite[Def.\ 4.8]{KerjeanPedrot24} (denoted $\overset{\partial\lambda}{\sim}$, $\overset{\partial\lambda}{\bowtie}$) to ours in Definition \ref{def:log_rels}, and vice versa.
    $S$ is a $\stdlc$-term.
    The careful reader would notice that, rigorously speaking, one needs to slightly modify the term $M$ when passing from the formulation in \protect\cite{KerjeanPedrot24} to ours, because of the modifications mentioned at Figure \ref{fig:dial}. We leave it implicit since it is easy to recover by following %the types in 
    Figure \ref{fig:WC}.}
    \label{fig:PM_to_me}
\end{figure}

As previously mentioned, the following result explains the relation between \cite[Def.\ 4.8]{KerjeanPedrot24} and our Definition \ref{def:log_rels}: the latter appears slightly more general than the former, due to the supplementary hypothesis on $\cC$ needed to have an equivalence.
The proof is by straightforward mutual induction on $B$ (using the relation in \cite{KerjeanPedrot24}).

\begin{proposition}\label{prop:Pm_to_me}
    Let $\llbracket\_\rrbracket$ be an interpretation $\stdlc\to\cC_!$.
    Suppose that the rules of Figure \ref{fig:PM_to_me} hold when $B$ is a ground type.
    If $\llbracket\_\rrbracket$ is fully complete (i.e.\ surjective on all homsets), then the rules lift to all simple type $B$.
    Moreover, \cite[Theorem 4.10]{KerjeanPedrot24} follows from our Theorem \ref{th:main} using $(1)$, $(2)$, and our Theorem \ref{th:main} follows from theirs using $(3)$, $(4)$.
\end{proposition}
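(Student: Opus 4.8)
The plan is to establish the lifting by mutual induction on the simple type $B$, proving the four rules $(1)$--$(4)$ of Figure~\ref{fig:PM_to_me} simultaneously; the base case $B$ ground is exactly the hypothesis. Writing IH$(k)$ for the induction hypothesis on rule $(k)$ at types strictly smaller than $B$, in the inductive step $B=E\to F$ I would unfold the defining clause at the arrow type of each of the four relations: ours from Definition~\ref{def:log_rels}, theirs from \cite[Figure~6]{KerjeanPedrot24}. Both clauses quantify over an argument related at $E$ and impose a condition at $F$, but ours ranges over all $H\sim_E e$, i.e.\ over \emph{arrows} $e\in\cC_!(\top,E)$, whereas theirs ranges over all $H\overset{\partial\lambda}{\sim}_E T$, i.e.\ over $\stdlc$-\emph{terms} $T$. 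The whole point is to move between these two quantifications, and this is done with the induction hypothesis at $E$: IH$(3)$ turns $H\overset{\partial\lambda}{\sim}_E T$ into $H\sim_E\llbracket T\rrbracket$, while IH$(1)$ turns $H\sim_E\llbracket T\rrbracket$ into $H\overset{\partial\lambda}{\sim}_E T$.

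Concretely, the two ``forward'' rules $(1)$,$(2)$ (from our relations to theirs) do \emph{not} need full completeness: given an arbitrary $H\overset{\partial\lambda}{\sim}_E T$, I instantiate our clause at $e:=\llbracket T\rrbracket$ using IH$(3)$, read off the resulting conditions at $F$ --- which live over $\restr{\llbracket S\rrbracket}{\llbracket T\rrbracket}=\llbracket S\,T\rrbracket$ by functoriality of $\llbracket\_\rrbracket$ --- and convert them back with IH$(1)$,IH$(2)$. The two ``backward'' rules $(3)$,$(4)$ are exactly where full completeness enters: to verify our clause I must treat an \emph{arbitrary} $H\sim_E e$, and surjectivity of $\llbracket\_\rrbracket$ on $\cC_!(\top,E)$ furnishes a closed $T$ with $\llbracket T\rrbracket=e$; IH$(1)$ then gives $H\overset{\partial\lambda}{\sim}_E T$, which I feed into their clause and whose conclusion at $F$ I transport down by IH$(3)$,IH$(4)$. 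Without surjectivity there could be arrows $e$ hit by no term, for which $\sim_E$ holds with no witnessing $T$, and the lift of $(3)$,$(4)$ would fail; this is the precise sense in which our Definition~\ref{def:log_rels} is strictly more general.

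The delicate rule is $(4)$, whose side condition $\llbracket S\rrbracket=\pshfwd{f}{a}$ is the bookkeeping reconstructing our \emph{pair} $(f,g)$, with $g=\dual{\llbracket S\rrbracket}$, out of their \emph{single} term $S$ (which encodes a pushforward). Pushing the induction through the arrow type forces me to check that this side condition reappears at $F$: I must verify that the $\stdlc$-term occurring in their clause at $F$ is again interpreted as a pushforward, i.e.\ that restriction and (un)currying commute with the pushforward operation, as in the identities $\restr{(\pshfwd{(f_{\vec a^j}^j)}{a_j})}{e}=\pshfwd{((\uncurry f)^j_{\vec a^j,e})}{a_j}$ already used in the proof of Theorem~\ref{th:main}. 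This rests on the fact, recalled in the introduction, that the $\stdlc$ differential combinator is interpreted as the forward differential / pushforward in $\cC_!$; establishing these commutations at each step is the main obstacle and the genuine content of the induction, the remaining manipulations being the same equational reasoning in $\Pdial$ and in $\cC$ already carried out in Lemma~\ref{lm:lift}.

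Finally, the ``moreover'' claims are immediate once all four rules hold at every $B$. \cite[Theorem~4.10]{KerjeanPedrot24} follows from Theorem~\ref{th:main} by applying $(1)$ to its part~$1$ (after rewriting $f(\vec a)=\llbracket\cdot\rrbracket$) and $(2)$ to its part~$2$; conversely Theorem~\ref{th:main} follows from theirs by applying $(3)$ and $(4)$, the side condition of $(4)$ being met because the second component produced by our construction is, by design, exactly of the form $\dual{(\pshfwd{\cdot}{\cdot})}$.
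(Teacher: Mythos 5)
Your proposal is correct and follows essentially the same route as the paper: the paper's own proof is just the sketch ``straightforward mutual induction on $B$'' with the remark that full completeness is used precisely for rules $(3)$ and $(4)$, and your argument is a faithful, fleshed-out version of that induction, correctly locating the role of surjectivity (producing a term $T$ with $\llbracket T\rrbracket=e$ so that IH$(1)$ yields $H\overset{\partial\lambda}{\sim}_E T$) exactly where the paper says it is needed. The extra details you supply about the clauses of \cite[Figure 6]{KerjeanPedrot24} and the pushforward side condition of $(4)$ cannot be cross-checked against the paper, which deliberately omits them, but they are consistent with its approach.
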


%\begin{remark}\label{rmk:logical_rel_in_CCRDC}
%    From Figure \ref{fig:PM_to_me} one sees that the formulation of the relations using the differential $\lambda$-calculus given in \cite{KerjeanPedrot24}, explain the reverse differentiation content of Dialectica within a forward differentiation setting.
%    In the absence of a reverse differential $\lambda$-calculus or Cartesian closed reverse differential categories (more on that in the final Section \ref{sec:conclusions_futurework}), our categorical setting allows to explain it within a setting in which one can explicitely talk about reverse differentiation, even though we still start from a forward setting.See also Remark \ref{rmk:log_rel_from_functor}.
%\end{remark}

\paragraph*{Is Dialectica \emph{really} Differentiation?}

The following example is due to personal discussions with Thomas Powell, which we thank:
Imagine having a theory for arithmetic on real numbers, which allows for extensional reasoning about equality, via a substitution rule allowing to derive $A\set{t/x}\vdash A\set{s/x}$ from $\vdash t=s$, where $A$ is any formula and $t,s$ are terms (of type $\mathrm{real}$).
%Typically one considers a quantifier-free formula at the left of the turnstile, but for the sake of the example let us consider this simpler case.
Suppose also that we have the axiom $x={(x^{3})}^{1/3}$ in our theory.
Then we have the derivation $M$ in Figure~\ref{fig:tom}.

Applying the Dialectica transformation to $M$, e.g.\ in the style of the recent preprint~\cite{barba_powell_25}, would extract terms $M^\bullet=\langle\lambda x^{\mathsf{real}}.x^3 \, , \, \lambda x^{\mathsf{real}} \pi^{\mathsf{real}}.\pi\rangle:W(\exists x\forall y.\, x=y \to \exists u\forall y.\, u^{1/3}=~y)$ witnessing the derivation $M$.
Now one would like to apply some sort of Remark \ref{rmk:dial_rev} in order to say that those represent a function and its reverse differential.
However, it is clearly not the case that the function $g:\mathbb R \times \mathbb R^{\bot}\to \mathbb R^{\bot}$, $g(x,\pi)(v)=\pi(v)$ is the reverse differential of $f:\mathbb R\to\mathbb R$, $f(x)=x^3$, the correct one being $Rf:\mathbb R\times \mathbb R^{\bot}\to \mathbb R^{\bot}$, $Rf(x,\pi)(v)=(\pshfwd{f}{x})^\bot(\pi)(v)=\pi((\pshfwd{f}{x})(v))=\pi(3x^2v)$.
%This is because Dialectica is ``blind'' to substitutions, but remembers quantifiers.

Is this in contradiction with \cite{KerjeanPedrot24}, where it is claimed that Dialectica computes the differentials even of high order functions?
The answer is no.
In fact, %both are talking about slightly different uses of Dialectica:
as we are hopefully going to clarify in the following, what is shown in \cite{KerjeanPedrot24} is that Dialectica composes as reverse differentiation, and therefore if we start with functions and their reverse differentials in the language, Dialectica preserves the second being the reverse differential of the first even for compound high order functions. %obtained by composition of definable ones.
This is also why it makes sense to only consider the simply typed $\lambda$-calculus: it is just what is needed in order to treat composition (and the definition) of high order functions. %, and this is what we are going to focus on in the reminder of the paper.
The previous example shows instead something different: if we consider Dialectica on a richer logic, with quantifiers and substitutions, then the extracted witness functions are not necessarily in a function/differential relation.
%Dialectica does not itself correspond to differentiation.

\begin{remark}
    Observe that, even in the case above where Dialectica does not compute the differential, the approach taken in \cite{barba_powell_25} shows that one can still understand its realizers via a generalized backpropagation relation. This is still related to the compositional aspect of it, i.e.\ it is shown that it can still be understood as a sort of reverse chain rule, and this remains coherent with \cite{KerjeanPedrot24} and the example above.
\end{remark}

In the following, we are going to clarify this by taking a categorical approach: while the above example extracts Dialectica realizers and looks at them as actual functions, \cite{KerjeanPedrot24} and our Remark \ref{rmk:dial_rev} only look at the structural (read: compositional) properties of the Dialectica extraction process, i.e.\ the transformation for the $\stlc$. The former can be understood in terms of De Paiva's Dialectica Categories (which abstracts the orthogonality relation as non-trivial subobjects); the latter, instead, as we are going to see, in terms of %(dependent)
lens categories.

\begin{figure}[t]
\centering
\begin{prooftree}
\AxiomC{}
\RightLabel{\scriptsize ax}
\UnaryInfC{$\vdash x={(x^{3})}^{1/3}$}
\RightLabel{\scriptsize subst}
\UnaryInfC{$x=y \vdash (x^3)^{1/3}=y$}
\RightLabel{\scriptsize $\forall L$}
\UnaryInfC{$\forall y.\, x=y \vdash (x^3)^{1/3}=y$}
\RightLabel{\scriptsize $\forall R$}
\UnaryInfC{$\forall y.\, x=y \vdash \forall y.\, (x^3)^{1/3}=y$}
\RightLabel{\scriptsize $\exists R$}
\UnaryInfC{$\forall y.\, x=y \vdash \exists u\forall y.\, u^{1/3}=y$}
\RightLabel{\scriptsize $\exists L$}
\UnaryInfC{$\exists x\forall y.\, x=y \vdash \exists u\forall y.\, u^{1/3}=y$}
\end{prooftree}
\caption{Dialectica extracted realizers of this proof are not in a ``function/differential'' relation.%More precisely, we use two terms $\mathsf{cube},\mathsf{cubicroot}:\mathsf{real}\to \mathsf{real}$
}
\label{fig:tom}
\end{figure}

\section{Reverse Differentiation and Lenses}\label{sec:diff_lenses}

As we have seen in the previous section, Dialectica can be read as a program transformation which mimics the construction of the reverse differential of a morphism.
In this section we see the general framework in which reverse differentiation takes place, namely that of (dependent) lenses.
%Another way of formulating Dialectica in a categorical way is by means of Dialectica Categories \cite{dePaivaPHD91}. In this section we argue that the relation with Differentiation is better seen via another categorical construction, namely that of (dependent) lenses.
We fix for all this section a category $\mathcal{L}$ with pullbacks.
The canonical example that we have in mind is the category $\mathbf{SMan}$ of smooth manifolds and smooth maps.

\begin{note}
We denote with $A \overset{\,\,\, \plb{f}{p}}{\leftarrow} \plb f \beta \overset{\overline{f}}{\to} \beta$ the pullback of a diagram $A \overset{f}{\to} B \overset{p}{\leftarrow} \beta$ (we use standard notation %\footnote{Remark that we make some standard abuse of notation here: the object $\plb f \beta$ also depends on the arrow $p$, not only on its source $\beta$ and $f$, and the same for $\overline{f}$.% which may also depends on $p$.
%} 
similar to e.g.\ \cite{hatcher_k_theory,diff_geo_book} for the fibre bundle $\plb{f}{p}:\plb{f}{\beta}\to A$).
%In this case the pullbacks always exist and one thinks of $p$ as a fiber bundle over $B$, its pullback $\plb f \beta$ being the fibre bundle obtained by the disjoint union of the fibres of $p$ with the appropriate topology which makes it a differential manifold (together with its projection map $\plb{f}{p}$) and $\overline{f}$ uniquely determining $f$.% and it is called the \emph{cover map of $f$}. We will also use this terminology in an arbitrary category. See e.g.\ Hatcher's book for details.
For example,
%\begin{example}\label{rmk:trivial_plb}
    the span $A \overset{\,\,\, p}{\leftarrow} \alpha \overset{1}{\to} \alpha$ is the pullback of the diagram $A \overset{1}{\to} A \overset{p}{\leftarrow} \alpha$, and with our notations we write $\plb{(1_A)}{\alpha}=\alpha$, $\overline{1_A}=1_\alpha$ and $\plb{(1_A)}{p}=p$.
    %Similarly, the span $A \overset{\,\,\, 1}{\leftarrow} A \overset{h}{\to} B$ is always the pullback of the diagram $A \overset{h}{\to} B \overset{1}{\leftarrow} B$.
    %With our notations, we have thus $\plb{h}{B}=A$, $\overline{h}=h$ and $\plb{h}{1_A}=1_B$.
%\end{example}
%\begin{example}\label{rmk:trivial_plb2}
    As another example, if $\times$ is a Cartesian product, one can take the pullback of a projection along any arrow: the pullback of $A \overset{f}{\to} B \overset{\pi_1}{\leftarrow} B\times Y$ is given by $\plb{f}{(B\times Y)}=A\times Y$, $\plb{f}{\pi_1^{B,Y}}=\pi_1^{A,Y}$ and $\overline{f}=f\times 1_Y$. %(the unique arrow realising the universal property for a given pair or arrows $h,g$ being $\langle h,g;\pi_2^{B,Y} \rangle$).
%\end{example}
\end{note}

%\paragraph*{Lenses}

For the purpose of this paper, we take the following:

\begin{definition}\label{def:lenses}
The category $\mathrm{Lens}(\mathcal{L})$ of lenses over $\mathcal{L}$ is defined as follows:
objects: arrows in $\mathcal{L}$, which we think of as fibre bundles and we write $p:\binom{\alpha}{A}$ instead of $p:\alpha\to A$;
arrows from $p:\binom{\alpha}{A}$ to $q:\binom{\beta}{B}$ are the data of both
%\begin{itemize}
    %\item 
    a $f:A\to B$ in $\mathcal{L}$
    %\item 
    and a span $\alpha \overset{\,\,\,F}{\leftarrow} \plb{f}{\beta} \overset{\overline{f}}{\to} \beta$ in $\mathcal{L}$, such that in $\mathcal{L}$ the pullback square of Figure \ref{fig:diag_lenses} holds, and the left triangle %of Figure \ref{fig:diag_lenses}
    commutes.
The identity on $p:\binom{\alpha}{A}$ is given by $1_A$ and the identity span $\alpha \overset{\,\,\,1}{\leftarrow} \alpha \overset{1}{\to} \alpha$.
Composition is given by pairwise composition in $\mathcal{L}$ and composition in the category %$\mathrm{Span}(\mathcal{L})$ 
of spans on $\mathcal{L}$.
One can check that these %data 
satisfy the conditions for being arrows in $\mathrm{Lens}(\mathcal{L})$.
\end{definition}

    % https://q.uiver.app/#q=WzAsNSxbMSwwLCJcXHBsYntmfXtcXGJldGF9Il0sWzIsMCwiXFxiZXRhIl0sWzEsMSwiQSJdLFsyLDEsIkIiXSxbMCwwLCJcXGFscGhhIl0sWzAsMSwie1xcb3ZlcmxpbmV7Zn19Il0sWzAsMiwie1xccGxie2Z9e3F9fSIsMl0sWzAsMywiIiwwLHsic3R5bGUiOnsibmFtZSI6ImNvcm5lciIsImJvZHkiOnsibmFtZSI6Im5vbmUifSwiaGVhZCI6eyJuYW1lIjoibm9uZSJ9fX1dLFsxLDMsInEiXSxbMiwzLCJmIiwyXSxbMCw0LCJGIiwyXSxbNCwyLCJwIiwyXV0=
\begin{figure}[t]
    \centering
    \boxed{
    \begin{tikzcd}[ampersand replacement=\&,cramped]
	\alpha \& {\plb{f}{\beta}} \& \beta \\
	\& A \& B
	\arrow["p"', from=1-1, to=2-2]
	\arrow["F"', from=1-2, to=1-1]
	\arrow["{{\overline{f}}}", from=1-2, to=1-3]
	\arrow["{{\plb{f}{q}}}"', from=1-2, to=2-2]
	\arrow["\lrcorner"{anchor=center, pos=0.125}, draw=none, from=1-2, to=2-3]
	\arrow["q", from=1-3, to=2-3]
	\arrow["f"', from=2-2, to=2-3]
\end{tikzcd}}
    \caption{Diagram for $( \, f \, , \, \alpha \overset{\,\,\,F}{\leftarrow} \plb{f}{\beta} \overset{\overline{f}}{\to} \beta \, )$ being an arrow in $\mathrm{Lens}(\mathcal{L})$.}
    \label{fig:diag_lenses}
\end{figure}
%\end{itemize}

%\subsection{Dialectica and lenses}

Actually, we mainly need the full subcategory %of $\mathrm{Lens}(\mathcal{L})$ 
of trivial bundles, i.e.\ first projections. Concretely:

\begin{definition}\label{def:ELens}
Let $\mathcal{E}\mathrm{Lens}(\mathcal{L})$ (``$\mathcal{E}$'' stands for ``Euclidean\footnote{For the sake of a terminology, we follow here the use of ``Euclidean'' in \cite{diff_geo_book} for the $\mathbb R^n$ spaces. In this case, the category of fibre bundles over them, which has a lens structure, is Euclidean in our sense.}'') %, in the sense of spaces having trivial cotangent bundles.
be the category with:
    \begin{itemize}
    \item Objects are first projections $\pi_1:\binom{A\times X}{A}$
    \item An arrow from $\pi_1:\binom{A\times X}{A}$ to $\pi_1:\binom{B\times Y}{B}$ is given by an $f:A\to B$ and a span $A\times X \overset{F}{\longleftarrow} A\times Y \overset{f\times 1}{\longrightarrow} B\times Y$ such that $F;\pi^{A,X}_1=\pi^{A,Y}_1$.
    \end{itemize}
\end{definition}

\begin{remark}
    The definition above does make sense: the span satisfies the pullback condition of $\mathrm{Lens}(\mathcal{L})$ so that the arrows above are arrows in $\mathrm{Lens}(\mathcal{L})$, and the identities and composition are inherited from $\mathrm{Lens}(\mathcal{L})$.
The only non-trivial part is to justify that the arrows above are closed w.r.t.\ composition in $\mathrm{Lens}(\mathcal{L})$: 
one can check that the composition $\pi_1:\binom{A\times X}{A} \overset{(f,F)}{\to} \pi_1:\binom{B\times Y}{B} \overset{(g,G)}{\to} \pi_1:\binom{C\times Z}{C}$ in $\mathrm{Lens}(\mathcal{L})$ of two arrows of $\mathcal{E}\mathrm{Lens}(\mathcal{L})$ is given by the pair of first component
$f;g$ and second componenet $A\times X \overset{\langle \pi_1, (f\times 1);G;\pi_2\rangle;F}{\longleftarrow} A\times Z \overset{(f;g)\times 1}{\longrightarrow} C\times Z)$, which is an arrow of $\mathcal{E}\mathrm{Lens}(\mathcal{L})$.
\end{remark}

%\paragraph*{Reverse Differentiation and Lenses}\label{subsec:forwardiff_to_lenses}
Let us now turn to see how lenses express reverse differentiation.
%Similarly to \cite[Proposition 5.7]{KerjeanPedrot24}, we can now define:

\begin{proposition}\label{prop:difcat_to_lenses}
Let $\cC_!$ be a Cartesian closed differential category which is the coKleisli of a category $\cC$ as in Section \ref{sec:log_rel}.
Remember that $\cC$ comes with a bijection $\chi:\cC(D\otimes E,F) \simeq \cC(D,\ilhom{\dual F}{\dual E})$.
We use it to define the \emph{reverse differential of $f:!A\to B$ in $\cC$} as $\rho%\overset{\leftarrow}{\partial}
f:=!A\otimes \dual B \overset{\uncurry\chi\partial f}{\lto} \dual A$, and the \emph{reverse differential $Rf \in \cC_!(A\times B^{\bot},A^{\bot})$ of $f$ in $\cC_!$} as $!(A\& \dual B)\simeq !A\otimes !(\dual B) \overset{1\otimes d}{\lto} !A\otimes \dual B \overset{\rho f}{\lto} \dual A$.
Then we have a functor $D:\cC_! \to \mathcal{E}\mathrm{Lens}(\cC_!)$ defined by sending $A$ to $\pi_1:\binom{A\times A^{\bot}}{A}$ and $A\overset{f}{\to}B$ to 
    $(\quad f \quad  , \quad  A\times A^{\bot} \overset{\langle \pi_1,Rf \rangle}{\longleftarrow} A\times B^{\bot} \overset{f\times 1}{\longrightarrow} B\times B^{\bot} \quad )$.
\end{proposition}

%Remark that $A\times B^{\bot}=f^*(B\times B^{\bot})$ (pullback of the first projection $B\times B^{\bot} \to B$ along $f$), so the definition makes sense.
%The statement is analogous to \cite[Proposition 5.7]{KerjeanPedrot24}, and so is its proof.
%We will prove a similar result in Proposition \ref{prop:revdifcat_to_lenses}.

%The categorical structure of the cotangent bundles on such spaces is abstracted by Cartesian differential categories satisfying the properties such as those of our fixed $\cC_!$.
%And in fact, one can remark that the image of the functor $D$ of Proposition \ref{prop:difcat_to_lenses} is actually contained in $\mathcal{E}\mathrm{Lens}(\mathcal{L}_!)$.

\begin{remark}\label{rmk:log_rel_from_functor}
    Remembering Figure \ref{fig:PM_to_me}, one sees that the passage from $M\overset{\lambda\partial}{\bowtie} S$ to $M\bowtie S$ contains the same information as the functor $D$ above: both build the reverse differential.% of~$f$.% We will see it in another shape again in Proposition \ref{prop:dial_to_ElensP}.
    %An interesting point would be to reformulate the Dialectica transformation as to mirror $D$, but this would probably involve taking into account dependent types in the syntactic definition of the transformation.
    %While this is done, we did not explore the relations with the differential approach yet.
    %See Section \ref{sec:conclusions_futurework}.
\end{remark}

%This shows that it only uses  trivial subcategory of Dialectica.

%-----UNCOMMENT LATER ----

 %-----UNCOMMENT LATER to\footnote{Here I write it for $\mathbf{Set}$. How to write this restriction in a generic category?} ${\pi^{A,X}_1}{\mid_{\alpha}}:\binom{\alpha}{A}$;
        
%-----UNCOMMENT LATER         An arrow $(f:A\to B,\, F: A\times Y \to X)$ from $(A,X,\alpha)$ to $(B,Y,\beta)$ is sent to
%-----UNCOMMENT LATER         \[( \quad f \quad , \quad \alpha \overset{\langle \pi_1, F\mid_{\plb{f}{\beta}}\rangle}{\longleftarrow} \plb{f}{\beta} \overset{\overline{f}}{\longrightarrow} \beta \quad )\]
 %-----UNCOMMENT LATER        from ${\pi^{A,X}_1}{\mid_{\alpha}}:\binom{\alpha}{A}$ to ${\pi^{B,Y}_1}{\mid_{\beta}}:\binom{\beta}{B}$.
        
%-----UNCOMMENT LATER ----

We can % one should be able to define a functor starting from a reverse tangent category. Let us first immediately see the case with trivial cotangent bundles, i.e.\ that of
also start from a reverse differential category (cfr.\ \cite[Example 28]{ReverseTC24}): by diagram chasings (in the case of composition one reasons on the diagram in Figure \ref{fig:revdifcat_to_lenses}), we have:

\begin{proposition}\label{prop:revdifcat_to_lenses}
    Let $\mathcal{L}$ be a Cartesian reverse differential category (\cite[Definition 13]{Reverse_derivative_categories}). %with pullbacks. %\footnote{Is this condition automatically satisfied? Look in the literature or ask JS or someone.}.
    We have a functor $\mathcal{T}^*:\mathcal{L} \to \mathcal{E}\mathrm{Lens}(\mathcal{L})$ defined by sending $A$ to $\pi_1:\binom{A\times A }{A}$ and $A\overset{f}{\to}B$ to $(\, f \,  , \,  A\times A  \overset{\langle \pi_1,Rf \rangle}{\longleftarrow} A\times B  \overset{f\times 1}{\longrightarrow} B\times B  \, )$,
    where $Rf: A\times B  \to A $ in $\mathcal{L}$ is the reverse differential of $f$ (which is a primitive data in $\mathcal{L}$).
    %Remark that $A\times B =f^*(B\times B )$ (pullback of $\pi_1:B\times B  \to B$ along $f$), so the definition makes sense.
    %As before, remark that actually this is a functor $\mathcal{T}^*:\mathcal{L} \to \mathcal{E}\mathrm{Lens}(\mathcal{L})$.
\end{proposition}
%\begin{proof}
%By diagram chasings.
%For the composition one reasons on Figure \ref{fig:revdifcat_to_lenses}.
%\end{proof}

\begin{figure}[t]
    \centering
    \boxed{
    \begin{tikzcd}[ampersand replacement=\&,cramped]
	\&\& {A\times C} \\
	\& {A\times B} \&\& {B\times C} \\
	{A\times A} \&\& {B\times B} \&\& {C\times C} \\
	A \&\& B \&\& C
	\arrow["{{\langle \pi_1, (f\times 1_C);Rg \rangle}}"'{pos=0.3}, from=1-3, to=2-2]
	\arrow["{{f\times 1}}", from=1-3, to=2-4]
	\arrow["\lrcorner"{anchor=center, pos=0.125, rotate=-45}, draw=none, from=1-3, to=3-3]
	\arrow["{{\langle \pi_1, Rf \rangle}}"'{pos=0.4}, from=2-2, to=3-1]
	\arrow["{{f\times 1}}", from=2-2, to=3-3]
	\arrow["{{\pi_1}}", from=2-2, to=4-1]
	\arrow["\lrcorner"{anchor=center, pos=0.125}, shift right=2, draw=none, from=2-2, to=4-3]
	\arrow["{{\langle \pi_1, Rg \rangle}}"'{pos=0.4}, from=2-4, to=3-3]
	\arrow["{{g\times 1}}", from=2-4, to=3-5]
	\arrow["{{\pi_1}}", from=2-4, to=4-3]
	\arrow["\lrcorner"{anchor=center, pos=0.125}, shift right=2, draw=none, from=2-4, to=4-5]
	\arrow["{{\pi_1}}"', from=3-1, to=4-1]
	\arrow["{\pi_1}"', from=3-3, to=4-3]
	\arrow["{{\pi_1}}", from=3-5, to=4-5]
	\arrow["f"', from=4-1, to=4-3]
	\arrow["g"', from=4-3, to=4-5]
\end{tikzcd}}
    \caption{Diagram for the proof of Proposition \ref{prop:revdifcat_to_lenses}.}
    \label{fig:revdifcat_to_lenses}
\end{figure}

%Remark that we didn't use Cartesian closedness.

\begin{figure}[t]
    \centering
    \boxed{
    \begin{tikzcd}[ampersand replacement=\&,cramped]
	{\cotan A} \& {f^*\cotan B} \& {\cotan B} \\
	\& A \& B
	\arrow["{{p^*_A}}"', from=1-1, to=2-2]
	\arrow["{\cotan f}"', from=1-2, to=1-1]
	\arrow["{{{\overline{f}}}}", from=1-2, to=1-3]
	\arrow["{{{\plb{f}{p}^*_B}}}"', from=1-2, to=2-2]
	\arrow["\lrcorner"{anchor=center, pos=0.125}, draw=none, from=1-2, to=2-3]
	\arrow["{{p^*_B}}", from=1-3, to=2-3]
	\arrow["f"', from=2-2, to=2-3]
\end{tikzcd}}
    \caption{Diagram for defining the functor $\mathcal{T}^*$ of Proposition \ref{prop:revtangcat_to_lenses}. Remark that the functor is well defined because the left triangle commutes thanks to \protect\cite[Proposition 21.(ii).(3), left diagram]{ReverseTC24}.}
    \label{fig:rev_tang_cat}
\end{figure}

Let us now mention reverse tangent categories \cite[Definition 24]{ReverseTC24}, which provide the general geometric picture of reverse differentiation and whose canonical example is $\mathbf{SMan}$ (\cite[Example 27]{ReverseTC24}). %let us recall that 
Roughly speaking, such a category $\mathcal{L}$ is a tangent category, i.e.\ a differential category with non-trivial tangent bundles $p_A:\binom{\tan A}{A}$ \cite[Definition 1]{ReverseTC24}, equipped with:
a full subcategory $\mathrm{DBun}_D(\mathcal{L})$ of $\mathcal{L}$ of differential bundles which behave like cotangent bundles (\cite[Definition 16, Definition 23]{ReverseTC24});
its canonical fibration $\mathrm{DBun}_D(\mathcal{L})\to\mathcal{L}$ and dual fibration $\mathrm{DBun}_D^\circ(\mathcal{L})\to\mathcal{L}$ (\cite[Propositions 17, 21]{ReverseTC24});
an involutive fibration morphism $\mathrm{DBun}_D(\mathcal{L})\to\mathrm{DBun}_D^\circ(\mathcal{L})$ giving the dual bundle $p^*:\binom{\alpha^*}{A}$ of a differential bundle $p:\binom{\alpha}{A}$.

\begin{proposition}\label{prop:revtangcat_to_lenses}
    Let $\mathcal{L}$ be a reverse tangent category. %with pullbacks. 
    We have a functor $\mathcal{T}^*:\mathcal{L} \to \mathrm{Lens}(\mathcal{L})$ defined by sending $A$ to $p^*_A:\binom{T^*\! A }{A}$ and $A\overset{f}{\to}B$ to $(\, f \,\,  , \,\,  \cotan A  \overset{\cotan f}{\longleftarrow} f^*\cotan B \overset{\overline{f}}{\longrightarrow} \cotan B  \, )$,
    where $p^*_A$ is the dual of the tangent bundle on $A$, $\overline{f}$ is part of the pullback square in $\mathcal{L}$ of Figure \ref{fig:rev_tang_cat} and $(f,\cotan f)$ is the image of $(f,\tan f)$ under the involution of $\mathcal{L}$ (\cite[Definition 23]{ReverseTC24}), where $(f,Tf)$ is defined in \cite[Example 2.4(ii)]{diff_bun_fib_tan_cat}.
% https://q.uiver.app/#q=WzAsNSxbMSwwLCJmXipcXGNvdGFuIEIiXSxbMiwwLCJcXGNvdGFuIEIiXSxbMSwxLCJBIl0sWzIsMSwiQiJdLFswLDAsIlxcY290YW4gQSJdLFswLDEsInt7XFxvdmVybGluZXtmfX19Il0sWzAsMiwie3tcXHBsYntmfXtwfV4qX0J9fSIsMl0sWzAsMywiIiwwLHsic3R5bGUiOnsibmFtZSI6ImNvcm5lciIsImJvZHkiOnsibmFtZSI6Im5vbmUifSwiaGVhZCI6eyJuYW1lIjoibm9uZSJ9fX1dLFsxLDMsIntwXipfQn0iXSxbMiwzLCJmIiwyXSxbMCw0LCJcXGNvdGFuIGYiLDJdLFs0LDIsIntwXipfQX0iXV0=
\end{proposition}
\begin{proof}
A reverse tangent category admits a reverse tangent bundle functor $\mathcal{T}^*:\mathcal{L}\to\mathrm{DBun}_D^\circ(\mathcal{L})$ as in \cite[Definition 25]{ReverseTC24}; 
one can think of $\mathrm{DBun}_D^\circ(\mathcal{L})$ as a subcategory of $\mathrm{Lens}(\mathcal{L})$ (just consider the left diagram of \cite[Proposition 21.ii(3)]{ReverseTC24}, i.e.\ ignore the differential part).
Composing with the inclusion, we get %the functor of 
Proposition \ref{prop:revtangcat_to_lenses}.
\end{proof}

We will actually not work in such framework in the following, and we will clarify later why we mentioned it, in addition than for the sake of clarity.

\section{Dialectica and Lenses}\label{sec:dial_is_lens}

\paragraph*{Dialectica categories}

Dialectica categories \cite{dePaivaPHD91} are the categorical formulation of the Dialectica transformation of an implication.
Given a Cartesian closed differential category $\mathcal{L}$, in \cite[Proposition 5.7]{KerjeanPedrot24} it is defined a functor from $\mathcal{L}$ to its Dialectica category, in order to explain the link between Dialectica and Differentiation. %, and %$\mathrm{Dial}(\mathcal{L})$. one needs enough structure on $\mathcal{L}$ (see Subsection \ref{subsec:framework}) %(in particular, $\star$-autonomous), in order to talk about reverse differentials of arrows.
%The authors make some quick considerations which guide our Section \ref{sec:diff_lenses}:
%\begin{description}
    %\item[-]
 %   they claim that ``\emph{In our point of view, objects of Dialectica category generalize the relation between a space and its tangent space}''.
%    We argue that the correct intuition should involve \emph{co}tangent spaces instead (see Remark \ref{rmk:dial_revdiff}).
    %\item[-]  
The authors suggest it can be generalised.
We will see that the functor does not really use the fact that we use a Dialectica category, in that it does not use subobjects, and thus %, which are however one of the peculiarities of such categories.
immediately lifts to lenses (Corollary \ref{cor:no_subsets}), 
%, Proposition \ref{prop:difcat_to_lenses}),
%which seems thus the right place in order to relate reverse differentiation and ``Dialectica-like'' (or thus, better, ``Lenses-like'') constructions, %This may seem strange at first, and we will discuss it in Remark \ref{cor:no_subsets}.
and we can then generalise it to reverse differential categories (Proposition \ref{prop:revdifcat_to_lenses}), ignoring the closedness condition. %(neverthless, we lose the possibility of interpreting high-order languages. % like the $\lambda$-calculus.
%\end{description}

%\paragraph*{Dialectica categories}

We denote a subobject $a$ of an object $A$ in a category by the abuse of notation $a \overset{a}{\rightarrowtail} A$.
We thus mean the mono $a$ to $A$ and its equivalence class.

%Remember the following well-known:
\begin{definition}\label{def:dial_cat}
    The Dialectica Category \cite{dePaivaPHD91} $\mathrm{Dial}(\mathcal{L})$ over $\mathcal{L}$ is made of:
%One could use the object "$2$". De Paiva says it has to be a ``lineale'' (see \href{https://www.academia.edu/674293/Lineales_Algebras_and_Categories_in_the_Semantics_of_Linear_Logic}{here} and \href{http://www.tac.mta.ca/tac/volumes/17/7/17-07.pdf}{here}), but it is still not clear to me. In \href{http://www.tac.mta.ca/tac/volumes/35/4/35-04.pdf}{here} there is a whole hierarchy of Chu-Dialectica constructions.

Objects are the data of two objects $A,X$ in $\mathcal{L}$ and a subobject $a \rightarrowtail A\times X$ in $\mathcal{L}$ (in $\mathbf{Set}$, $a$ is a subset of $A\times X$, playing the role of a formula with two free variables, e.g.\ a binary predicate).
An arrow from $(A,X,a)$ to $(B,Y,b)$ is the data of an $f:A\to B$ and a $F: A\times Y \to X$ in $\mathcal{L}$ such that given the diagram of pullbacks in Figure \ref{fig:dial_cat}, there exists exactly one dotted arrow as in the figure making the triangle commute.
%The identity on $(A,X,a)$ is $(1_A,\pi_2^{A,X})$ %(this is an arrow precisely because $a$ is mono) 
%and the composition $(f,F);(g,G)$ is $(f;g, \langle \pi_1, (f\times 1);G \rangle;F)$. One can immediately see that this is the same arrow given in \cite[Proposition 1]{dePaivaPHD91}.
\end{definition}

\begin{figure}[t]
    \centering
    \boxed{
    \begin{tikzcd}[ampersand replacement=\&,cramped]
	\& {\plb{b}{(A\times Y)}} \&\& {\plb{\langle \pi_1,F \rangle}{a}} \\
	b \&\& {A\times Y} \&\& a \\
	\& {B\times Y} \&\& {A\times X}
	\arrow["{\plb{b}{(f\times 1})}"', from=1-2, to=2-1]
	\arrow["{\overline b}", tail , from=1-2, to=2-3]
	\arrow["\lrcorner"{anchor=center, pos=0.125, rotate=-45}, draw=none, from=1-2, to=3-2]
	\arrow[dashed, from=1-4, to=1-2]
	\arrow["{\plb{\langle \pi_1,F \rangle}{a}}"', tail, from=1-4, to=2-3]
	\arrow["{\overline{\langle \pi_1,F \rangle}}", from=1-4, to=2-5]
	\arrow["\lrcorner"{anchor=center, pos=0.125, rotate=-45}, draw=none, from=1-4, to=3-4]
	\arrow["b"', tail, from=2-1, to=3-2]
	\arrow["{f\times 1}", from=2-3, to=3-2]
	\arrow["{\langle \pi_1,F \rangle}"', from=2-3, to=3-4]
	\arrow["a", tail, from=2-5, to=3-4]
\end{tikzcd}}
    \caption{Diagram for $( \, f:A\to B \, , \, F: A\times Y \to X \, )$ being an arrows of $\mathrm{Dial}(\mathcal{L})$. In $\mathbf{Set}$, this reads as: $(f(x),y)\in b$ for all $(x,y)\in A\times Y$ such that $(x,F(x,y))\in a$.}
    \label{fig:dial_cat}
\end{figure}

\begin{remark}
    In a setting where tangent and cotangent spaces are isomorphic, the typing of $F$ in the previous definition is precisely that of the reverse differential of $f$.
    %It is thus natural to wonder if that analogy can be pushed further and indeed also the composition in such category corresponds to the composition rule for reverse differentials.
    %As we will see, this happens because (a very special subcategory of) Dialectica categories can be put in relation with (dependent) lenses (Proposition \ref{prop:Giso}), and the latter is a general framework to talk about constructions such as reverse differentiation (Section \ref{subsec:revdiff_lenses}).
    Moreover, the identity on $(A,X,a)$ in $\mathrm{Dial}(\mathcal{L})$ is $(1_A,\pi_2^{A,X})$, and one can check that the composition $(f,F);(g,G)$, given in \cite[Proposition 1]{dePaivaPHD91}, coincides with $(f;g, \langle \pi_1, (f\times 1);G \rangle;F)$, which is the same as for the composition of reverse differentials.
\end{remark}

Contrarily to the typing of $F$ and its composition, the condition involving subobjects in the definition of an arrow of $\mathrm{Dial}(\mathcal{L})$ is not immediately clear in geometric terms.
This is precisely the phenomenon that we discussed in Figure \ref{fig:tom}.
In fact, we will get rid of this condition the following (e.g.\ Proposition \ref{prop:difcat_to_lenses}, Corollary \ref{cor:no_subsets}), as it appears not necessary in order to link Differentiation and Dialectica. %: as mentioned, this only involves the ``structural'' properties of the latter.
Let us start with the following easy:

%Actually, Euclidean-lenses are subcategories of Dialectica ones:

\begin{proposition}\label{prop:G}
We have a functor $G:\mathcal{E}\mathrm{Lens}(\mathcal{L}) \to \mathrm{Dial}(\mathcal{L})$
defined as follows:
\begin{itemize}
        \item An object $\pi_1:\binom{A\times X}{A}$ is sent to $(A,X,1_{A\times X})$;
        
        \item An arrow $( \quad f:A\to B \quad , \quad A\times X \overset{F}{\longleftarrow} A\times Y \overset{f\times 1}{\longrightarrow} B\times Y \quad )$ from $\pi_1:\binom{A\times X}{A}$ to $\pi_1:\binom{B\times Y}{B}$ is sent to $(f , \, F;\pi_2)$ from $(A,X,1_{A\times X})$ to $(B,Y,1_{B\times Y})$.
\end{itemize}
\end{proposition}
%One uses the trivial pullbacks of $B\times X \overset{1}{\to} B\times X \overset{f\times 1}{\leftarrow} A\times X$ and $A\times Y \overset{\langle \pi_1, F;\pi_2 \rangle}{\to} A\times A \overset{1}{\leftarrow} A\times A$ in order to see that $G(f,F)$ is an arrow of $\mathrm{Dial}(\mathcal{L})$.%, and the fact that it preserves identities and compositions is immediately checked.

The functor above only uses full subobjects in $\mathrm{Dial}(\mathcal{L})$, so only a strict subcategory of it:

\begin{proposition}\label{prop:Giso}
The image of the functor $G$ is the following full subcategory $\mathcal{E}\mathrm{Dial}(\mathcal{L})$ of $\mathrm{Dial}(\mathcal{L})$, and $G:\mathcal{E}\mathrm{Lens}(\mathcal{L}) \to \mathcal{E}\mathrm{Dial}(\mathcal{L})$ is an isomorphism.

Objects are given by full subobjects $(A,X,1_{A\times X})$.
An arrow from $(A,X,1_{A\times X})$ to $(B,Y,1_{B\times Y})$ is given by arrows $f:A\to B$ and $F:A\times Y \to X$.

The inverse $G^{-1}:\mathcal{E}\mathrm{Dial}(\mathcal{L}) \to \mathcal{E}\mathrm{Lens}(\mathcal{L})$ of $G$ is given as follows:
An object $(A,X,1_{A\times X})$ is sent to $\pi_1^{A,X}$.  
An arrow $(f:A\to B,\, F: A\times Y \to X)$ from $(A,X,1_{A\times X})$ to $(B,Y,1_{B\times Y})$ is sent to $( \, f \, , \, A\times X \overset{\langle \pi_1, F\rangle}{\longleftarrow} A\times Y \overset{f\times 1}{\longrightarrow} B\times Y \, )$ from $\pi_1^{A,X}$ to $\pi_1^{B,Y}$.
\end{proposition}
\begin{proof}%[Proof of Proposition \ref{prop:Giso}]
That $\mathcal{E}\mathrm{Dial}(\mathcal{L})$ is a subcategory of $\mathrm{Dial}(\mathcal{L})$ is immediate to check.
%In particular, no condition is required for the arrows $(f,F)$ because the Dialectica condition becomes trivially satisfied for full subobjects.
In order to show that $G^{-1}((f,F);(g,G))=G^{-1}(f,F);G^{-1}(g,G)$ one uses the fact that $\langle \pi_1^{A,Z} , (f\times 1_Z);G) \rangle ; \langle \pi_1^{A,Y} , F \rangle =$ $\langle \pi_1^{A,Z} , \langle \pi_1^{A,Z} , (f\times 1_Z);G \rangle ; F \rangle$, which can be immediately checked.
In order to see that $(G;G^{-1})(f,F)=(f,F)$, one uses that $F;\pi_1=\pi_1$, which is given by definition of $\mathrm{Lens}(\mathcal{L})$.
\end{proof}

The previous results could be rephrased via dependent types in a more syntactic fashion.
We use them now to discuss the already mentioned functor $\cC_!\to \mathrm{Dial}(\cC_!)$ defined in \cite[Proposition 5.7]{KerjeanPedrot24} in order to suggest a link between Dialectica categories and Differentiation, for $\cC_!$ a differential category as in Section \ref{sec:log_rel}.
In the very last lines of \cite{KerjeanPedrot24}, the authors also suggest a similar functor $\mathcal{L}\to\mathrm{Dial}(\mathcal{L})$ for $\mathcal{L}$ a reverse differential category.

\begin{corollary}\label{cor:no_subsets}
Remember the functors $D$ and $\mathcal{T}^*$ from Proposition \ref{prop:difcat_to_lenses}, \ref{prop:revdifcat_to_lenses}.
The functor in \cite[Proposition 5.7]{KerjeanPedrot24} is actually the composition $\cC_! \overset{D}{\to} \mathcal{E}\mathrm{Lens}(\cC_!) \overset{G}{\simeq} \mathcal{E}\mathrm{Dial}(\cC_!) \xhookrightarrow{} \mathrm{Dial}(\cC_!)$.
The one at the very last lines of \cite{KerjeanPedrot24} is actually the composition $\mathcal L \overset{\mathcal T^*}{\to} \mathcal{E}\mathrm{Lens}(\mathcal L) \overset{G}{\simeq} \mathcal{E}\mathrm{Dial}(\mathcal L) \xhookrightarrow{} \mathrm{Dial}(\mathcal L)$.
\end{corollary}

This tells us something interesting:
what relates Dialectica to differentiation is better understood in terms of lenses, rather than of Dialectica categories. 
This was not remarked in \cite{KerjeanPedrot24}, and we will explicit it even more in the reminder of the paper.
It also tells us that the natural generalization to non-trivial subobjects is the one in Proposition \ref{prop:revtangcat_to_lenses} with reverse tangent categories (and this is also why we mentioned them).

\begin{remark}
The lens structure involved in the corollary above is merely the Euclidean one, so only trivial bundles.
From the logical point of view, it means that we do not look at formulas, and this may seem strange.
But we can understand it by looking at the computational formulation of Dialectica in $\Pdial$ \cite[Section 8.3.2 and 9.1.4]{pedrot_thesis}: the subobjects (i.e.\ the formulas) are not there anymore, because their role (which is that of an orthogonality relation) is in a sense already encoded by the witnesses ($W$) and counters ($C$). %, as shown by the soundness of the transformation with respect to them.
%In fact this is the same point as in Figure \ref{fig:tom} again.
\end{remark}

\paragraph*{Dialectica is a functor to Lenses}

We understood that Dialectica is related with differentiation thanks to its relation to Euclidean lenses.
One can then wonder if it is possible to directly express it (in its structural version, as in $\Pdial$) as a transformation involving them.
After all, for now Dialectica was just the syntactic transformation of Figure \ref{fig:Pdial}.
This is indeed possible and it is of high relevance, because it synthetizes, in a sense, the whole claim of this paper.

\begin{figure}[t]
    \centering
    % https://q.uiver.app/#q=WzAsNSxbMCwwLCJXKEEpXFx0aW1lcyBcXHBtb257QyhBKX0iXSxbMiwwLCJXKEEpXFx0aW1lcyBcXHBtb257QyhCKX0iXSxbNCwwLCJXKEIpXFx0aW1lcyBcXHBtb257QyhCKX0iXSxbMiwxLCJXKEEpIl0sWzQsMSwiVyhCKSJdLFswLDMsInpeMSIsMl0sWzEsMCwiXFxsYW5nbGUgel4xeyx9el4yXFxiaW5kIE1fe3peMX0gXFxyYW5nbGUiLDJdLFsxLDIsIlxcbGFuZ2xlIFxcd2RpYWwgTSB7LH0gel4yIFxccmFuZ2xlIl0sWzEsMywiel4xIiwyXSxbMiw0LCJ6XjEiXSxbMyw0LCJcXHdkaWFsIE0iLDJdLFsxLDEwLCIiLDAseyJsZXZlbCI6MSwic3R5bGUiOnsibmFtZSI6ImNvcm5lciJ9fV1d
\[\boxed{\begin{tikzcd}[ampersand replacement=\&,cramped]
	{W(A)\times \pmon{C(A)}} \&\& {W(A)\times \pmon{C(B)}} \&\& {W(B)\times \pmon{C(B)}} \\
	\&\& {W(A)} \&\& {W(B)}
	\arrow["{z^1}"', from=1-1, to=2-3]
	\arrow["{\langle z^1{,}\,z^2\bind M_{(z^1)} \rangle}"', from=1-3, to=1-1]
	\arrow["{\langle \wdial M {,} \, z^2 \rangle}", from=1-3, to=1-5]
	\arrow["{z^1}"', from=1-3, to=2-3]
	\arrow["{z^1}", from=1-5, to=2-5]
	\arrow[""{name=0, anchor=center, inner sep=0}, "{\wdial M}"', from=2-3, to=2-5]
	\arrow["\lrcorner"{anchor=center, pos=0.125}, draw=none, from=1-3, to=0]
\end{tikzcd}}\]
    \caption{One can check that, in $\Pdial_{\mathrm{cat}}$, the square is a pullback and the triangle commutes.}
    \label{fig:LensPcat}
\end{figure}

%The structure of Euclidean-lenses is well-suited to shape the Dialectica transformation in $\Pdial$:
Let $\stlc_{\mathrm{cat}}$ and $\Pdial_{\mathrm{cat}}$ be the syntactic categories induced by the simply-typed $\lambda$-calculus and $\Pdial$ as usual, i.e.: objects are types, an arrow from $A$ to $B$ is the equational semantics class of a term $z:A\vdash M:B$, the identities are variables $z:A\vdash z:A$ and composition is substitution.
Now, $\Pdial_{\mathrm{cat}}$ does not have pullbacks in general, but we can still define the category $\mathcal{E}\mathrm{Lens}(\Pdial_{\mathrm{cat}})$ over it,
exactly as in Definition \ref{def:ELens} (ignoring that it is a subcategory of a whole category of lenses).

Now by looking at Figure \ref{fig:LensPcat}, we can prove the following (remember that the $\Pdial$-term $N^i$ stands for the projection of the $\Pdial$-term $N$):

\begin{proposition}\label{prop:dial_to_ElensP}
    We have a functor $\stlc_{\mathrm{cat}}\to\mathcal{E}\mathrm{Lens}(\Pdial_{\mathrm{cat}})$ defined as follows:
    \begin{itemize}
        \item An object $A$ is sent to $(z:W(A)\times \pmon{C(A)}\vdashPdial z^1:W(A))$;
        
        \item An arrow $(z:A\vdash_\stlc M:B)$ in $\stlc_{\mathrm{cat}}$ from $A$ to $B$ is sent to the arrow in $\mathcal{E}\mathrm{Lens}(\Pdial_{\mathrm{cat}})$ from $(z:W(A)\times \pmon{C(A)}\vdashPdial z^1:W(A))$ to $(z:W(B)\times \pmon{C(B)}\vdashPdial z^1:W(B))$ given by $(z:W(A)\vdashPdial \wdial M : W(B))$ and the span: 
        \[W(A)\times \pmon{C(A)} \overset{{\langle z^1{,}\,z^2\bind M_{(z^1)} \rangle}}{\longleftarrow} W(A)\times \pmon{C(B)} \overset{{\langle \wdial M {,} \, z^2 \rangle}}{\longrightarrow} W(B)\times \pmon{C(B)}.\]
\end{itemize}
\end{proposition}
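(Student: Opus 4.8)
The plan is to check the three functoriality obligations in turn: that the stated assignment on arrows actually produces an arrow of $\mathcal{E}\mathrm{Lens}(\Pdial_{\mathrm{cat}})$, that it sends identities to identities, and that it commutes with composition. Throughout I keep variable names explicit, writing $x:A\vdash M:B$ and $y:B\vdash N:C$ for two composable $\stlc$-arrows, so that their $\stlc_{\mathrm{cat}}$-composite is the substitution $x:A\vdash N\{M/y\}:C$; the key algebraic inputs will be the equational semantics of $\Pdial$ (the monad laws for $\pmon{\_}$ and the pair equations) together with the fact that Dialectica is compatible with substitution.

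Well-definedness and identities are immediate. By Definition~\ref{def:ELens}, the pair consisting of $f=\wdial M$ and the span $W(A)\times\pmon{C(A)} \overset{F}{\leftarrow} W(A)\times\pmon{C(B)} \overset{f\times 1}{\to} W(B)\times\pmon{C(B)}$ with $F=\langle z^1,\,z^2\bind M_{(z^1)}\rangle$ is an $\mathcal{E}\mathrm{Lens}$-arrow exactly when $F;\pi_1=\pi_1$, which holds because the first component of $F$ is literally $z^1$. For the identity of $A$, namely the variable $x:A\vdash x:A$, we have $\wdial x=x$ and $\cdial x x=\lambda\pi.[\pi]$, so its image has base map $1_{W(A)}$ and span component $F=\langle z^1,\,z^2\bind\lambda\pi.[\pi]\rangle$; the right-unit law of the monad gives $z^2\bind\lambda\pi.[\pi]=z^2$, hence $F=\langle z^1,z^2\rangle=1$, i.e.\ the identity lens.

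For composition I would compute the composite of the two images in $\mathcal{E}\mathrm{Lens}(\Pdial_{\mathrm{cat}})$ with the explicit formula stated just before Lemma~\ref{lm:tec_lm}: its base is $\wdial M;\wdial N$ and its span component is $\langle\pi_1,(f\times 1);G;\pi_2\rangle;F$, where $G=\langle z^1,\,z^2\bind N_{(z^1)}\rangle$ is the span of $N$. Unfolding $f\times 1$, $G$ and $F$ by the pair equations, this span component evaluates to $\langle z^1,\,(z^2\bind N_{(\wdial M)})\bind M_{(z^1)}\rangle$, where $N_{(\wdial M)}$ denotes $\cdial N y$ with its witness set to $\wdial M$. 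On the other hand, the image of $N\{M/y\}$ has base $\wdial{(N\{M/y\})}$ and span component $\langle z^1,\,z^2\bind (N\{M/y\})_{(z^1)}\rangle$. By associativity of $\bind$, the two span components coincide exactly when
\[ \cdial{(N\{M/y\})}{x} \;=\; \lambda\pi.\big(\cdial N y\{\wdial M/y\}\,\pi \,\bind\, \cdial M x\big), \]
and the two base maps coincide exactly when $\wdial{(N\{M/y\})}=\wdial N\{\wdial M/y\}$.

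The hard part will be these two last identities, which are precisely the \emph{substitution lemma} and the \emph{chain rule} for the Dialectica transformation; note that the right-hand side of the displayed equation is the very shape already visible in the application clause of Figure~\ref{fig:dial}. I would prove both at once by a simultaneous induction on $N$, using only the equational semantics of $\Pdial$ (associativity and unit of $\bind$, the commutative-monoid laws, and their compatibility with the monad); this compatibility of $\wdial{(\_)}$ and $\cdial{(\_)}{(\_)}$ with substitution is essentially the content of the fact, due to \cite{Pedrot14}, that the translation only depends on the equational classes. The only genuine subtlety is the variable bookkeeping, since $\cdial{(\_)}{(\_)}$ is indexed by a variable and substitution remaps counters; once these lemmas are in place, equating the two lenses is the purely mechanical pasting computation sketched above.
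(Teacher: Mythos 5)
Your proposal is correct and follows essentially the same route as the paper, whose own argument is just the well-definedness check recorded in Figure \ref{fig:LensPcat} (the pullback square and commuting triangle), with functoriality implicitly deferred to the known compatibility of $\wdial{(\_)}$ and $\cdial{(\_)}{x}$ with substitution established in \cite{Pedrot14}. Your write-up simply makes explicit what the paper leaves to the reader --- the identity check via the monad unit law, and the composition check reduced (correctly, via associativity of $\bind$) to the substitution lemma for $\wdial{(\_)}$ and the chain rule for $\cdial{(\_)}{x}$ --- so there is no gap, only added detail.
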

%\begin{proof}    By Figure \ref{fig:LensPcat}.\end{proof}

%Remembering Remark \ref{rmk:dial_revdiff}, the direction that we are going to take now does not address the fact that the previous functor only uses trivial subobjects, but 
%In the following, we study the lens structure induced by a differential structures on a category.

\begin{corollary}\label{cor:dial=functor}
    %Composing Propositions \ref{prop:G}, \ref{prop:dial_to_ElensP}, one has 
    We have a functor $\stlc_{\mathrm{cat}}~\to~\mathcal{E}\mathrm{Lens}(\Pdial_{\mathrm{cat}})~\overset{G}{\to}~\mathcal{E}\mathrm{Dial}(\Pdial_{\mathrm{cat}})$ sending a simple type $A$ to $(W(A),\pmon{C(A)},1)$ and a term
    $(x:A\vdash_\stlc M:B)$ from $A$ to $B$ to the arrow from $(W(A),\pmon{C(A)},1)$ to $(W(B),\pmon{C(B)},1)$ given by the pair
    $(x:W(A)\vdashPdial \wdial M : W(B))$ and $(z:W(A)\times \pmon{C(B)}\vdashPdial z^2\bind M_{(z^1)}:\pmon{C(A)})$.
    For $z:=\langle x,[y]\rangle$, since $y\notin \cdial{M}{x}$, the latter term becomes $x:W(A)\vdashPdial M_{x}:C(B)\to\pmon{C(A)}$. %i.e.\ this functor \emph{literally} gives the Dialectica transformation in $\Pdial$ together with its soundness Theorem mentioned in the introduction (see first lines of Page 3).
\end{corollary}

%Proposition \ref{prop:dial_to_ElensP} shapes Dialectica as a functor $\mathcal{L}\to \mathcal{E}\mathrm{Lens}(\mathcal{L})$ of Proposition \ref{prop:revdifcat_to_lenses}.
Corollary~\ref{cor:dial=functor} \emph{precisely} expresses the Dialectica transformation of $\stlc$, together with its soundness Theorem, as a functor. %mentioned in the introduction.
%If Dialectica categories mimic Dialectica as a \emph{formulas} transformation, in categorical terms, we have here expressed Dialectica as a \emph{program} transformation, in categorical terms.
To the best of our knowledge, expressing the Dialectica $\stlc\to\Pdial$ of \cite{KerjeanPedrot24,Pedrot14}
in this way was not remarked before and, %even if, again, the expert reader will not be surprised.
even if straightforward, we think that it is very instructive.
Most importantly, it factors it through lenses, showing that the actual transformation %(as defined in \cite{Pedrot14})
is performed \emph{solely} by the functor of Proposition \ref{prop:dial_to_ElensP}, the map $G$ being unrelated with Dialectica.
Its compositional properties follow then from this factorization, and ultimately make it behave as a reverse differentiation transformation.

\section{Future work questions}\label{sec:conclusions_futurework}

%\paragraph*{Future work}

    The first question is to express %the logical relations of 
    Definition \ref{def:log_rels} in a \emph{reverse} differential category. %, in order to present Dialectica in a setting which is already reversed from the beginning.
    It should be possible, %(our categorical setting is, basically, already such)
    but it should be Cartesian closed in order to interpret $\lambda$-calculus, and this has not been explored in the literature yet. %This is the first goal.
    %This brings to a second point, which is 
    Its syntactic counterpart asks for defining a ``\emph{reverse} differential $\lambda$-calculus''. %, which would be to reverse differentiation the same as usual differential $\lambda$-calculus is to forward differentiation. 
    In a sense, it already appears %(regardless of Dialectica) 
    in %good starting point would be the language in 
    \cite{ong_mak}, where the authors mimic pullbacks of differential 1-forms.
    A natural goal is then to see if this is suitable for expressing Dialectica, even if its involved operational semantics makes it difficult to work with. %; its sophisticated operational semantics makes it not trivial though.
    
    Another interesting question is that of lifting the relations in Definition \ref{def:log_rels} to a dependently typed language (or different logical systems). % as source for Dialectica.
    The natural starting points would be \cite{moss_glenn_dialTT18,nunes_vakar_grot_dial2024,pedrot_thesis}, where it is shown how to formulate Dialectica for dependent types.
    However, its categorical counterpart should be that of Cartesian \emph{closed} reverse tangent categories, which again have not been explored in the literature yet.
    
    Finally, remark that models of differential linear logic on the lines of \cite{EhrhardDiLL} (such as the ones that we used in Section \ref{sec:log_rel}), do \emph{not} include geometric models like $\mathbf{SMan}$ in the Cartesian closed case.
    The recent setting of \emph{linearly closed reverse differential categories} in \cite{Jacobians_Gradients_for_CDC} precisely allows to keep geometric examples while still being able to (un)curry linear maps.
    %We wonder whether our
    Would that be the correct general framework in order to formulate the present work? %such framework?
    %\item %In general, the point is to have a functor from $\mathcal{C}$ to $\mathrm{Lens}(\mathcal{C})$ giving, for each object and arrow $f$ of $\mathcal{C}$, respectively, an object in $\mathcal{C}$ which behaves like a cotangent bundle and an arrow in $\mathcal{C}$ which behaves like the reverse differential of $f$. The case of a CCDC which is the coKleisli (w.r.t.\ a comonad enjoying Seeley's isos) of a $\star$-autonomous differential storage category, and the case of a reverse differential category (with pullbacks), given in the previous pages, are subsumed by the notion of reverse tangent category. A natural question is thus: is the latter the most general framework in order for this construction to take place in $\mathcal{C}$? If one wants to stick to a clear differential geometry intuition, then RTC arguably appear as the natural general solution to this problem.
    %However, we would like to have such a thing as a ``Cartesian \emph{closed} reverse tangent category'', in order to model situations appearing in functional programming and, in particular, the (differential) $\lambda$-calculus and its relations with Dialectica.
    %That probably moves away from usual geometric intuition and so the question mentioned above would still be open. This is an interesting aspect that we plan to tackle in the future.

\bibliographystyle{acm}
\bibliography{./mybib}

\medskip

\newpage

\appendix

\section{Appendix: a few of proofs}

\begin{proof}[Proof of Proposition \ref{prop:Pm_to_me}]
    By straightforward mutual induction on $B$.
    We do not give the details because it involves the relation in \cite{KerjeanPedrot24} which we did not report here. The requirement that $\llbracket\_\rrbracket$ be full complete is used in the case of $(3)$ and $(4)$.
\end{proof}

For Section \ref{sec:diff_lenses}, we have the following.

\begin{figure}[t]
    \centering
    \boxed{
    \begin{tabular}{lcr}
    \begin{tikzcd}[ampersand replacement=\&,cramped]
	\& P \\
	\&\& {\plb{\overline{f}}{\plb g \gamma}} \\
	\& {\plb f \beta} \&\& {\plb g \gamma} \\
	\alpha \&\& \beta \&\& \gamma \\
	A \&\& B \&\& C
	\arrow[color={rgb,255:red,214;green,92;blue,92}, squiggly, from=1-2, to=2-3]
	\arrow[color={rgb,255:red,214;green,92;blue,92}, curve={height=6pt}, dashed, from=1-2, to=3-2]
	\arrow[color={rgb,255:red,214;green,92;blue,92}, curve={height=30pt}, dotted, from=1-2, to=3-4]
	\arrow[color={rgb,255:red,214;green,153;blue,92}, curve={height=-30pt}, from=1-2, to=4-5]
	\arrow[color={rgb,255:red,214;green,153;blue,92}, curve={height=30pt}, from=1-2, to=5-1]
	\arrow["{\plb{\overline{f}}{G}}"'{pos=0.3}, from=2-3, to=3-2]
	\arrow["{\overline{{\overline{f}}}}", from=2-3, to=3-4]
	\arrow["\lrcorner"{anchor=center, pos=0.125, rotate=-45}, draw=none, from=2-3, to=4-3]
	\arrow["F"'{pos=0.4}, from=3-2, to=4-1]
	\arrow["{{{\overline{f}}}}", color={rgb,255:red,51;green,51;blue,255}, from=3-2, to=4-3]
	\arrow["{\plb f q}", color={rgb,255:red,51;green,51;blue,255}, from=3-2, to=5-1]
	\arrow["\lrcorner"{anchor=center, pos=0.125}, shift right=2, color={rgb,255:red,51;green,51;blue,255}, draw=none, from=3-2, to=5-3]
	\arrow["G"'{pos=0.4}, from=3-4, to=4-3]
	\arrow["{{{\overline{g}}}}", color={rgb,255:red,153;green,92;blue,214}, from=3-4, to=4-5]
	\arrow["{\plb q r}", color={rgb,255:red,153;green,92;blue,214}, from=3-4, to=5-3]
	\arrow["\lrcorner"{anchor=center, pos=0.125}, shift right=2, color={rgb,255:red,153;green,92;blue,214}, draw=none, from=3-4, to=5-5]
	\arrow["p"', from=4-1, to=5-1]
	\arrow["q"', color={rgb,255:red,51;green,51;blue,255}, from=4-3, to=5-3]
	\arrow["r", color={rgb,255:red,153;green,92;blue,214}, from=4-5, to=5-5]
	\arrow["f"', color={rgb,255:red,51;green,51;blue,255}, from=5-1, to=5-3]
	\arrow["g"', color={rgb,255:red,153;green,92;blue,214}, from=5-3, to=5-5]
\end{tikzcd} & & 
    \begin{tikzcd}[ampersand replacement=\&,cramped]
	{\plb{f}{\beta}=\plb{\overline f}{\beta}} \& \beta \\
	{\plb{f}{\beta}} \& \beta \\
	A \& B
	\arrow["{{\overline{\overline{f}}=\overline{f}}}", from=1-1, to=1-2]
	\arrow["{{1={\plb{\overline{f}}{1}}}}"', from=1-1, to=2-1]
	\arrow["\lrcorner"{anchor=center, pos=0.125}, draw=none, from=1-1, to=2-2]
	\arrow["1", from=1-2, to=2-2]
	\arrow["{{\overline{f}}}"{description}, from=2-1, to=2-2]
	\arrow["{\plb{f}{p}}"', from=2-1, to=3-1]
	\arrow["\lrcorner"{anchor=center, pos=0.125}, draw=none, from=2-1, to=3-2]
	\arrow["p", from=2-2, to=3-2]
	\arrow["f"', from=3-1, to=3-2]
\end{tikzcd}
    \end{tabular}}
    \caption{$\mathrm{Lens}(\mathcal{L})$ is a category. Left: diagram for composition; right: for identities.}
    \label{fig:lens_cat}
\end{figure}

\begin{proposition}\label{prop:lens_well_def}
    Definition \ref{def:lenses} makes sense, i.e.\ $\mathrm{Lens}(\mathcal{L})$ is a well-defined category.
\end{proposition}
\begin{proof}
The only non trivial part is that our composition gives indeed an arrow of our claimed category, and that our claimed identity is indeed such.
In both cases, the argument is similar to the pasting law for pullbacks.
For the composition, one sees that our definition claims to take the composition $p:\binom{\alpha}{A} \overset{(f,F)}{\to} q:\binom{\beta}{B} \overset{(g,G)}{\to} r:\binom{\gamma}{C}$ to be $(f;g, \, \alpha \overset{\plb{\overline{f}}{G};F}{\longleftarrow} \plb{\overline{f}}{\plb g \gamma} \overset{\overline{{\overline{f}}};\overline{g}}{\longrightarrow} \gamma)$. 
To show that this is indeed an arrow in our claimed category, we consider the left diagram in Figure \ref{fig:lens_cat},
which is read as follows:
given the blue and purple pullbacks, and given the composition span (which is defined via the black pullback), in order to show that our composition is well defined, we show both that $A \overset{\plb{\overline{f}}{G};\plb f q}{\longleftarrow} \plb{\overline{f}}{\plb g \gamma} \overset{\overline{{\overline{f}}};\overline{g}}{\longrightarrow} \gamma$ is the pullback of $A \overset{f;g}{\longrightarrow} C \overset{r}{\leftarrow} \gamma$ and that $\plb{\overline{f}}{G};F;p=\plb{\overline{f}}{G};\plb f q$.
The latter is immediate (because $(f,F)$ is an arrow).
For the former, the commutation is immediate (because $(g,G)$ is an arrow), and given the two orange arrows, one obtains the unique squiggly red arrow by first obtaining the unique dotted red arrow (using the purple pullback), then obtaining the unique dotted arrow (using the blue pullback), and finally the desired one (using the black pullback).
%Moreover, our composition is associative because $\mathrm{Span}(\mathcal{L})$ is a category.
For identities, one uses the fact that, because $\mathrm{Span}(\mathcal{L})$ is a category, the upper square of the right diagram of Figure \ref{fig:lens_cat} is a pullback as soon as the bottom one is.
%From that, one sees that claimed identity in $\mathrm{Lens}(\mathcal{L})$ is indeed such.
\end{proof}

In the discussion just after Definition \ref{def:ELens} justifying that it makes sense, the mentioned composition immediately follows from the following: % Lemma \ref{lm:tec_lm}.

\begin{lemma}\label{lm:tec_lm}
    In a category with products, given a commutative square and a triangle as the ones in purple in Figure \ref{fig:tec_lemma}, the pullback of the diagram $A\times Y \overset{f\times 1}{\longrightarrow} B\times Y \overset{G}{\longleftarrow} B\times Z$ is the black one in the same figure.
\end{lemma}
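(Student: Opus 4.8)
The plan is to verify the claimed pullback square directly, by checking the universal property in the category with products. I would set up the data as in Figure~\ref{fig:tec_lemma}: the target cospan is $A\times Y \overset{f\times 1}{\longrightarrow} B\times Y \overset{G}{\longleftarrow} B\times Z$, and the candidate apex is $A\times Z$, equipped with the two legs $\langle \pi_1, (f\times 1);G;\pi_2 \rangle : A\times Z \to A\times Y$ and $f\times 1 : A\times Z \to B\times Z$. The purple triangle tells us that $G;\pi_1 = \pi_1$ (the $B$-component of $G$ is just the projection), and this is exactly the commutativity datum I expect to need repeatedly.

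First I would check that the candidate square commutes, i.e.\ that $\langle \pi_1, (f\times 1);G;\pi_2 \rangle ; (f\times 1) = (f\times 1) ; G$ as arrows $A\times Z \to B\times Y$. Since both sides land in a product $B\times Y$, it suffices to compare their two components. On the $B$-component, the left side gives $\pi_1;f = f;\pi_1$ and the right side gives $(f\times 1);G;\pi_1 = (f\times 1);\pi_1 = \pi_1;f$ using the triangle $G;\pi_1=\pi_1$; so they agree. On the $Y$-component, the left side gives $(f\times 1);G;\pi_2$ by construction of the pairing, and the right side gives $(f\times 1);G;\pi_2$ directly; so they agree. Hence the square commutes.

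Next I would verify the universal property. Given any object $P$ with maps $h_1 : P \to A\times Y$ and $h_2 : P \to B\times Z$ such that $h_1;(f\times 1) = h_2;G$, I must produce a unique mediating map $P \to A\times Z$. The natural candidate is $\langle h_1;\pi_1, h_2;\pi_2 \rangle$ (the dashed red arrow in the figure), taking the $A$-component from $h_1$ and the $Z$-component from $h_2$. I would then check that this map, postcomposed with each leg, recovers $h_1$ and $h_2$ respectively: postcomposing with $f\times 1$ and comparing components recovers $h_2$ (the $B$-component needs the compatibility hypothesis $h_1;(f\times 1);\pi_1 = h_2;G;\pi_1 = h_2;\pi_1$, again invoking the triangle), while postcomposing with $\langle \pi_1, (f\times 1);G;\pi_2\rangle$ and comparing components recovers $h_1$ (the $Y$-component again uses the compatibility hypothesis to rewrite $(f\times 1);G;\pi_2$ in terms of $h_1$'s $Y$-component). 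Uniqueness follows since the two components of any mediator are forced by postcomposition with $\pi_1$ and the $Z$-projection.

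I do not expect a genuine obstacle here: everything reduces to component-wise bookkeeping in a Cartesian category, and the only nonformal ingredient is the repeated use of the purple triangle $G;\pi_1 = \pi_1$, which is precisely what lets the $B$-components match up. The main care point — the closest thing to a ``hard part'' — is keeping the two projection conventions straight (the $\pi_1$ out of $A\times Y$ versus out of $B\times Y$, etc.) and confirming that the compatibility hypothesis $h_1;(f\times 1) = h_2;G$ is exactly strong enough to recover both legs without additional assumptions; once the components are tracked carefully, the verification is routine.
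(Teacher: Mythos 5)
Your proof is correct and matches the paper's intended argument exactly: the dashed red arrow $\langle h_1;\pi_1,\, h_2;\pi_2 \rangle$ in Figure~\ref{fig:tec_lemma} is precisely the mediating map you construct, and the proof encoded by that figure is the same direct verification of the universal property, using the purple triangle $G;\pi_1 = \pi_1$ and componentwise bookkeeping in the products. The only (harmless) caveat is a notational slip in your commutativity check, where ``$\pi_1;f = f;\pi_1$'' conflates the two projection conventions you yourself warn about; the intended equality $(f\times 1);\pi_1 = \pi_1;f$ is the right one and your argument goes through.
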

\begin{proof}
    The only non trivial part is to show that, given a $P$ and the orange arrows $h_1,h_2$ under the commutation hypotheses $h_1;(f\times 1)=h_2;G$, one has that the red dotted arrow makes the two red+black=orange triangles commute.
    
    Let us call $h$ the red arrow.
    We show that both $h; \langle \pi_1 \, , \, (f\times 1);G;\pi_2 \rangle$ and $h_1$ satisfy the universal property of $\langle h_1;\pi_1 \, , \, h;(f\times 1);G;\pi_2 \rangle$ (so the left red+black=orange triangle commutes), and both $h;(f\times 1)$ and $h_2$ satisfy the universal property of $\langle h_1;(f\times 1);\pi_1 \, , \, h_2;\pi_2 \rangle$ (so the right red+black=orange triangle commutes).
   
   We first show the right red+black=orange triangle.
    For $h;(f\times 1)$, we trivially have $h;(f\times 1);\pi_1=h_1;(f\times 1);\pi_1$, and $h;(f\times 1);\pi_2=h;\pi_2=h_2;\pi_2$ by definition of $\times$ and of $h$.
    For $h_2$, we trivially have $h_2;\pi_2=h_2;\pi_2$, and $h_2;\pi_1=h_1;(f\times 1);\pi_1$ by using the commutative triangle of the hypothesis and then the commutation hypotheses on $h_1,h_2$.

    Now we can prove the left red+black=orange triangle.
   For $h_1$, we trivially have $h_1;\pi_1=h_1;\pi_1$, and $h_1;\pi_2=h;(f\times 1);G;\pi_2$ using the fact that $\pi_2^{A,Y}=(f\times 1);\pi_2^{B,Y}$ by definition of $\times$, then the commutation hypotheses on $h_1,h_2$ and finally the just showed red+black=orange triangle involving $h_2$.
    For $h; \langle \pi_1 \, , \, (f\times 1);G;\pi_2 \rangle$, we immediately have $h; \langle \pi_1 \, , \, (f\times 1);G;\pi_2 \rangle;\pi_1=h_1;\pi_1$ by definition of $\langle \cdot,\cdot \rangle$, and $h; \langle \pi_1 \, , \, (f\times 1);G;\pi_2 \rangle;\pi_2^{A,Y}=h;\langle \pi_1 \, , \, (f\times 1);G;\pi_2 \rangle;(f\times 1);\pi_2^{B,Y}=h;(f\times 1);G;\pi_2$.
\end{proof}

\begin{figure}[t]
    \centering
    \boxed{
    \begin{tikzcd}[ampersand replacement=\&]
	\&\& P \\
	\\
	\&\& {A\times Z} \\
	\& {A\times Y} \&\& {B\times Z} \\
	\&\& {B\times Y} \\
	A \&\& B
	\arrow["{{\langle h_1;\pi_1, h_2;\pi_2 \rangle}}"{description}, color={rgb,255:red,214;green,92;blue,92}, dashed, from=1-3, to=3-3]
	\arrow["{{h_1}}"', color={rgb,255:red,214;green,153;blue,92}, curve={height=30pt}, from=1-3, to=4-2]
	\arrow["{{h_2}}", color={rgb,255:red,214;green,153;blue,92}, curve={height=-30pt}, from=1-3, to=4-4]
	\arrow["{{{{\langle \pi_1, (f\times 1);G;\pi_2 \rangle}}}}"'{pos=0.3}, from=3-3, to=4-2]
	\arrow["{{f\times 1}}", from=3-3, to=4-4]
	\arrow["\lrcorner"{anchor=center, pos=0.125, rotate=-45}, draw=none, from=3-3, to=5-3]
	\arrow["{{f\times 1}}", color={rgb,255:red,153;green,92;blue,214}, from=4-2, to=5-3]
	\arrow["{{{{\pi_1}}}}", color={rgb,255:red,153;green,92;blue,214}, from=4-2, to=6-1]
	\arrow["G"'{pos=0.4}, color={rgb,255:red,153;green,92;blue,214}, from=4-4, to=5-3]
	\arrow["{{{{\pi_1}}}}", color={rgb,255:red,153;green,92;blue,214}, from=4-4, to=6-3]
	\arrow["{{{\pi_1}}}"', color={rgb,255:red,153;green,92;blue,214}, from=5-3, to=6-3]
	\arrow["f"', color={rgb,255:red,153;green,92;blue,214}, from=6-1, to=6-3]
\end{tikzcd}}
    \caption{Figure of Lemma \ref{lm:tec_lm}.}
    \label{fig:tec_lemma}
\end{figure}

\end{document}